\documentclass[11pt,reqno,oneside]{amsart}

\usepackage{graphicx,subfigure,threeparttable}
\usepackage{amssymb}
\usepackage{epstopdf}

\usepackage[a4paper, total={6in, 9in}]{geometry}

\usepackage{booktabs} 
\usepackage{subfig} 

\usepackage{amsfonts}

\usepackage{amsmath,amsthm,mathrsfs,amssymb,cite}
\usepackage[usenames]{color}




\newtheorem{thm}{Theorem}[section]

\newtheorem{lem}{Lemma}[section]

\theoremstyle{definition}

\newtheorem{alg}{Algorithm}

\theoremstyle{remark}

\newtheorem{rem}{Remark}[section]
\numberwithin{equation}{section}

\numberwithin{equation}{section}

\newcounter{saveeqn}


\title[A novel quantitative inverse scattering scheme]{A novel quantitative inverse scattering scheme using interior resonant modes}

\author{Youzi He}
\address{Department of Mathematics, Hong Kong Baptist University, Kowloon, Hong Kong SAR, China}
\email{yolanda19he@gmail.com}

\author{Hongyu Liu}
\address{Department of Mathematics, City University of Hong Kong, Kowloon, Hong Kong SAR, China}
\email{hongyliu@cityu.edu.hk; hongyu.liuip@gmail.com}

\author{Xianchao Wang}
\address{
School of Mathematics, Harbin Institute of Technology, Harbin,  China}
\email{xcwang90@gmail.com}

\date{} 
\begin{document}

\maketitle

\begin{abstract}
	This paper is devoted to a novel quantitative imaging scheme of identifying impenetrable obstacles in time-harmonic acoustic scattering from the associated far-field data. The proposed method consists of two phases. In the first phase, we determine the interior eigenvalues of the underlying unknown obstacle from the far-field data via the indicating behaviour of the linear sampling method. Then we further determine the associated interior eigenfunctions by solving a constrained optimization problem, again only involving the far-field data. In the second phase, we propose a novel iteration scheme of Newton's type to identify the boundary surface of the obstacle. By using the interior eigenfunctions determined in the first phase, we can avoid computing any direct scattering problem at each Newton's iteration. The proposed method is particularly valuable for recovering a sound-hard obstacle, where the Newton's formula involves the geometric quantities of the unknown boundary surface in a natural way. We provide rigorous theoretical justifications of the proposed method. Numerical experiments in both 2D and 3D are conducted, which confirm the promising features of the proposed imaging scheme. In particular, it can produce quantitative reconstructions of high accuracy in a very efficient manner.

	\medskip

	\noindent{\bf Keywords:}~~  inverse scattering problem, sound-hard obstacle, interior resonant modes, linear sampling method, Newton-type method
	
	\noindent{\bf 2020 Mathematics Subject Classification:}~~35R30, 35P25, 35P10, 49M15

	
\end{abstract}

\section{Introduction}


In this article, we are concerned with the inverse acoustic scattering problem of reconstructing an impenetrable obstacle by the associated far-field measurement. To begin with, we present the mathematical setup of the inverse scattering problem for our study.

Let $k\in\mathbb{R}_+$ be the wavenumber of a time harmonic wave and $D\subset \mathbb{R}^m, m=2,3$ be a bounded domain with a Lipschitz-boundary $\partial D$ and a connected complement $\mathbb{R}^m \setminus \overline{D}$, which signifies the target obstacle in our study. We take the incident field $u^i$ to be a time harmonic plane wave of the form
\[
u^i:=u^i(x, d, k)=\mathrm{e}^{\mathrm{i}k x\cdot d}, \quad  x \in \mathbb{R}^m,
\]
where $\mathrm{i}:=\sqrt{-1}$ is the imaginary unit, $d \in \mathbb{S}^{m-1}$ is the direction of propagation and $\mathbb{S}^{m-1}:=\{x \in \mathbb{R}^m: |x|=1 \}$ is the unit sphere in $\mathbb{R}^m$. Let $u^s$ and $u:=u^i + u^s$ signify the scattered and total wave fields, respectively. The forward scattering problem is described by the following Helmholtz system:
\begin{equation}\label{eq:forwardpr}
	\left\{
	\begin{array}{ll}
		\Delta u+k^2 u =0  & \text{in} \  \mathbb{R}^m \setminus \overline{D}, \medskip \\
		\displaystyle \mathcal{B}(u) = 0  & \text{on} \  \partial D, \medskip \\
		\displaystyle \lim\limits_{r\to\infty} r^{\frac{m-1}{2}}\left(\frac{\partial u^s}{\partial r}-\mathrm{i}ku^s\right)=0, & r=|x|,
	\end{array}
	\right.
\end{equation}
where the last limit is known as the Sommerfeld radiation condition which holds uniformly in $\hat x:=x/|x|\in\mathbb{S}^{m-1}$ and characterizes the outgoing nature of the scattered field. In \eqref{eq:forwardpr}, $\mathcal{B}(u)=u$ or $\mathcal{B}(u)=\partial u/\partial\nu$, respectively, signify the physical scenarios that the obstacle $D$ is sound-soft or sound-hard. Here and also in what follows, $\nu\in\mathbb{S}^{m-1}$ denotes the exterior unit normal vector to $\partial D$. The well-posedness of the scattering system \eqref{eq:forwardpr} can be conveniently found in \cite{LSSZ}. There exists a unique solution $u\in H_{loc}^1(\mathbb{R}^m\backslash\overline{D})$, and it admits the following asymptotic expansion \cite{CK19}:
\begin{equation*}
	u^s(x,d,k)=\frac{\mathrm{e}^{\mathrm{i}k|x|}}{|x|^{\frac{m-1}{2}}} \bigg\{u^{\infty}(\hat{x},d,k)+\mathcal{O}\bigg(\frac{1}{|x|} \bigg) \bigg\} \quad \text{as} \ |x|\to\infty,
\end{equation*}
which holds uniformly for all directions $\hat{x}:=x/|x|\in \mathbb{S}^{m-1}$. The complex-valued function $u^{\infty}$ defined on the unit sphere $\mathbb{S}^{m-1}$ is known as the far-field pattern of the scatter field $u^s$, which encodes the information of the scattering obstacle $D$. The inverse scattering problem of our concern is to recover $D$ by knowledge of $u^\infty$; that is,
\begin{equation}\label{eq:ipp1}
	\mathcal{F}(D)=u^{\infty}(\hat{x},d,k), \quad (\hat{x},d,k)\in \mathbb{S}^{m-1} \times \mathbb{S}^{m-1} \times V,
\end{equation}
where $V$ is an open interval in $\mathbb{R}_{+}$, and $\mathcal{F}$ is an abstract operator defined by \eqref{eq:forwardpr}. Though the forward scattering problem \eqref{eq:forwardpr} is linear, it can be straightforwardly verified that the inverse problem \eqref{eq:ipp1} is nonlinear.

The inverse scattering problem \eqref{eq:ipp1} is prototypical model of fundamental importance for many scientific and industrial applications including radar/sonar, medical imaging, geophysical exploration and nondestructive testing. There are rich results in the literature for \eqref{eq:ipp1}, both theoretically and numerically. It is known that one has the unique identifiability for \eqref{eq:ipp1}, namely the correspondence between $u^\infty$ and $D$ is one-to-one. Many numerical reconstruction algorithms have been developed in solving \eqref{eq:ipp1} as well. In fact, it is impossible for us to list and discuss all of the existing numerical studies for \eqref{eq:ipp1} in the literature. In what follows, we only discuss a few selective ones to motivate our current study. Roughly speaking, the existing numerical approaches can be classified into two categories: quantitative ones and qualitative ones. Quantitative approaches employ Newton's linearization and/or optimization strategies to tackle \eqref{eq:ipp1} directly. We refer the reader to the Newton's iterative method \cite{CK19, HK04, Zhang2017}, the decomposition method \cite{CK19, Guo2011, Kirsch1988} and the recursive linearization method \cite{Bao2015, Bao2003} for the relevant results. The other class of approaches resorts to establishing a criterion to distinguish the interior and exterior of the obstacle $D$, and hence can qualitatively reconstruct the boundary of $D$. These include the linear sampling method \cite{Cakoni2006, Cakoni2011,CH05, LLZ2}, the factorization method \cite{Kirsch2007}, the direct sampling method \cite{GR, LLZ1, Liu2017} and the probe method \cite{NPS,Pot}.

In this article, we develop a novel quantitative reconstruction scheme for \eqref{eq:ipp1}. A key ingredient of our method is to connect the exterior scattering problem \eqref{eq:forwardpr} to its interior counterpart:
\begin{equation}\label{eq:inteNeu}
	\left\{
	\begin{array}{ll}
		\Delta v+k^2 v=0\quad &\text{in} \ D, \medskip \\
		\displaystyle \mathcal{B}(v)=0 \quad &\text{on} \ \partial D,
	\end{array}
	\right.
\end{equation}
where $v\in H^1(D)$. \eqref{eq:inteNeu} is the classical Dirichlet/Neumann Laplacian eigenvalue problem and $v$ represents an interior resonant mode. It has been well understood, say e.g. there exist infinitely many discrete eigenvalues accumulating only at $\infty$, and the eigenspace associated with each eigenvalue is finite-dimensional. It turns out that the the exterior problem \eqref{eq:forwardpr} and the interior problem \eqref{eq:inteNeu} are dual to each other, in particular in the sense that one can read off the spectral system of \eqref{eq:inteNeu} from the far-field data of \eqref{eq:forwardpr}. This is done by employing a certain indicating behaviour of the linear sampling method and by solving a constrained optimization problem, both only involving the far-field data in a simple manner, which form the first phase of the proposed method. In the second phase, we derive an iteration formula of Newton's type which can be used to quantitatively reconstruct the boundary of the obstacle. By utilizing the interior eigenfunctions determined in the first phase, the shape derivative involved in the Newton's iteration can be calculated in a very efficient way without the need to solve any forward scattering problem. To highlight the novel contributions of the current study, we present three remarks as follows.

First, determining the interior eigenvalues from the associated far-field data via the linear sampling method has been studied in \cite{CCH10,Lechleiter,LLWW,LiuSun}. In \cite{LLWW}, the determination of the interior Dirichlet Laplacian eigenfunctions has been further investigated. In fact, it is proposed in \cite{LLWW} that the Dirichlet eigenfunctions can be used to recover $\partial D$ (in a qualitative manner). This is natural since the Dirichlet eigenfunctions vanish on $\partial D$. However, such an idea cannot be extended to the sound-hard case since identifying the place where the Neumann data vanish requires knowing the normal vector of $\partial D$ which is equivalent to knowing $\partial D$. It is our aim of developing an imaging scheme that can recover sound-hard obstacles by making use of the interior resonant modes. It turns out that the scheme we develop can not only recover sound-hard obstacles but can also yield highly-accurate quantitative reconstructions. The key idea is to establish an iteration formula of Newton's type by using the homogeneous Neumann condition on $\partial D$, which can then be used for identifying the unknown $\partial D$. A salient feature of this Newton's iteration approach is that  it is essentially derivative-free in the sense that the involved shape derivatives can be explicitly calculated by using the Neumann eigenfunctions that have been already determined, and there is no need to calculate any direct scattering problem. Moreover, the idea can be directly extended to the sound-soft case to produce quantitative reconstructions that outperform the qualitative reconstructions in \cite{LLWW}. However, it can be seen even at this point that the sound-hard case is technically more challenging than the sound-soft case. Hence, we shall mainly stick our study to the sound-hard case and only remark the sound-soft case throughout the rest of the paper.

Second, the far-field data used in \eqref{eq:ipp1} is significantly overdetermined. In fact, most of the existing methods in the literature, in particular those qualitative ones, make use of far-field data corresponding to a fixed $k$. Moreover, it is widely conjectured that one can determine an obstacle by at most a few or even a single far-field measurements, namely a few $k$ and $d$ or even fixed $k$ and $d$ in \eqref{eq:ipp1}; see e.g. \cite{CDLZ,CY03,LPRX,LZ1,RL08} for related theoretical uniqueness and stability results and \cite{LLZ1} for related numerical reconstructions if a-priori geometrical knowledge is available on $D$. The overdetermination in \eqref{eq:ipp1}, especially on $k$, is mainly needed to compute the interior eigenvalues located inside $V$, which is the prerequisite to the eigenfunction determination. As mentioned in the first remark, the eigenfunctions are critical to avoid computing shape derivatives for the Newton's iteration in our method. Hence, the overdetermined data are an unobjectionable cost for achieving both high accuracy and high efficiency. It is interesting to note that all of the aforementioned qualitative methods inevitably involve shape derivatives and hence the calculation of a large amount of forward scattering problems, and moreover suffer from the local minima issue. In fact, to our best knowledge, no 3D quantitative reconstruction of a sound-hard obstacle was ever conducted in the literature due to the highly complicated computational nature. In Section 4, we present both 2D and 3D reconstructions and it can be seen that our method is computationally straightforward. Moreover, it can be seen that our method is robust and insensitive to the initial guess, and this is physically justifiable as the interior resonant modes carry the geometrical information of $D$ in a sensible way (though implicitly in the sound-hard case). It is also interesting to note the similarity shared by our method and the machine learning approaches for inverse obstacle problems. In \cite{GLWZ,YYL}, machine learning approaches were developed that can yield a highly-accurate reconstruction of a target obstacle by using only a few far-field measurements. However, a large amount of data as well as computations are needed to train the neural networks therein, and hence are computationally more costly.

Third, we would like to mention two practical scenarios where our method might find applications to corroborate our viewpoint in the above remark. In many practical applications, say e.g. photo-acoustic tomography, time-dependent data are collected which can yield multiple frequency data as needed in \eqref{eq:ipp1} via temporal Fourier transform \cite{Liu2015}. The other application is the bionic approach of generating human body shape \cite{LLTW}, where overdetermined data are not a practical drawback, but highly accurate reconstructions are needed. We shall consider the application of our method in those practical setups in our future work.

The rest of the paper is organized as follows. Section 2 introduces the linear sampling method to determine the interior eigenvalues from the multi-frequency far-field data. Then by the Herglotz wave approximation, we present an efficient optimization algorithm to reconstruct the interior eigenfunctions. In Section 3, a novel Newton iterative formula is proposed to reconstruct the sound-hard obstacle via the use of the interior  eigenfunctions. Numerical experiments are conducted in 4 to verify the promising features of our method.

\section{Determination of Neumann eigenvalues and eigenfunctions}

In this section, we aim to determine the interior eigenvalues and eigenfunctions to \eqref{eq:inteNeu} from the far-field data in \eqref{eq:ipp1} corresponding to the unknown $D$. As discussed in the previous section, we shall mainly focus on the sound-hard case, namely $\mathcal{B}(u)=\partial u/\partial\nu$ and only remark the extension to the sound-soft case.


To begin with, we introduce the linear sampling method for reconstructing the Neumann eigenvalues. The linear sampling method is a widely used method to recover the shape of the scatterer without {\it a priori} information of the boundary condition of the scatterer. The basic idea of the linear sampling method is choosing an approximate indicator function and distinguishing whether the sampling point is inside or outside the scatterer. To that end, we present the indicator function of the linear sampling method.
Define the test function by
\begin{equation*}
	\Phi^{\infty}(\hat{x},z,k)={\rm e}^{-\mathrm{i}k\hat{x}\cdot z}, \quad \hat{x}\in \mathbb{S}^{m-1},
\end{equation*}
where $z\in \mathbb{R}^m$ denotes the sampling point. The key ingredient of the linear sampling method is to find the kernel $g\in L^2(\mathbb{S}^{m-1})$ as a solution to the following integral equation
\begin{equation}\label{eq:ffeqn}
	(F_k g)(\hat{x})=\Phi^{\infty}(\hat{x},z,k),
\end{equation}
where $F_k$ is the far field operator from $L^2(\mathbb{S}^{m-1})$ to $L^2(\mathbb{S}^{m-1})$ and it is defined by
\begin{equation}\label{eq:ffope}
	(F_kg)(\hat{x}):=\int_{\mathbb{S}^{m-1}} u^{\infty}(\hat{x},d, k)\, g(d)\, \mathrm{d}s(d), \quad \hat{x}\in \mathbb{S}^{m-1}.
\end{equation}
We usually adopt  Tikhonov regularization  to solve the Fredholm integral equation \eqref{eq:ffeqn} since this equation is ill posed. Due to the existence of noisy data in practice, we suppose that $F_k^{\delta}$  is the corresponding operator to the noisy measurements  $u^{\infty,\delta}$ and then instead seek the unique minimizer $g_{z,k}^{\delta}\in L^2(\mathbb{S}^{m-1})$ of the following functional
\begin{equation}\label{eq:TikF}
	\|F_k^{\delta}g-\Phi^{\infty}(\hat{x},z,k)  \|^2_{ L^2(\mathbb{S}^{m-1})}+\epsilon \| g \|^2_{L^2(\mathbb{S}^{m-1})},
\end{equation}
where $\epsilon$ is a regularization parameter.

Next, we discuss how to recover the Neumann eigenvalues  by using the linear sampling method.  According to the rigorous justification of the rationale behind the linear sampling method (see \cite{Arens09, LiuSun}), one can use the following lemma to distinguish whether $k$ is a Neumann eigenvalue or not.

\begin{lem}\label{le:LinrSamg}
	Define the Herglotz wave function by
	\begin{equation*}
		H_k g(x):=\int_{\mathbb{S}^{m-1}} \mathrm{e}^{\mathrm{i}kx\cdot d} g(d) \, {\rm d}s(d).
	\end{equation*}
	Suppose $g_{z,k}^{\delta}$ be the unique minimizer of the Tikhonov functional \eqref{eq:TikF}. Then, for almost every $z\in D$, $\|H_k g_{z,k}^{\delta}\|_{H^1(D)}$ is bounded as $\delta\to 0$ if and only if $k$ is not a Neumann eigenvalue.
\end{lem}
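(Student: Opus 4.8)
The plan is to reduce the stated dichotomy to the solvability of an interior Neumann problem and then invoke the standard theory of Tikhonov regularization for compact operators. The natural starting point is a factorization of the far-field operator. Writing down the scattered field generated by the incident Herglotz wave $H_k g$ and imposing the sound-hard condition $\partial_\nu u=0$ on $\partial D$, I would factor $F_k=\mathcal{G}\mathcal{N}$, where $\mathcal{N}\colon g\mapsto -\partial_\nu(H_k g)|_{\partial D}$ records the Neumann trace of the incident field (since $\partial_\nu u^s=-\partial_\nu u^i$) and $\mathcal{G}$ is the data-to-pattern operator sending Neumann boundary data to the far-field pattern of the associated radiating exterior solution. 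This converts the equation $F_k g=\Phi^{\infty}(\hat x,z,k)$ into a boundary-matching problem, because $\Phi^{\infty}(\hat x,z,k)$ is, up to a fixed constant, the far-field pattern of the point source (fundamental solution) $\Phi(\cdot,z)$ centred at $z$.

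The next step is to isolate the interior problem whose solvability controls everything. Requiring the scattered field to coincide with $\Phi(\cdot,z)$ in the exterior forces $H_k g$ to be an interior Neumann solution carrying the boundary data $-\partial_\nu\Phi(\cdot,z)|_{\partial D}$. The decisive computation is the Fredholm solvability condition at a Neumann eigenvalue: applying Green's representation formula to an eigenfunction $v_j$ (which satisfies $\partial_\nu v_j=0$) gives $v_j(z)=-\int_{\partial D} v_j\,\partial_\nu\Phi(\cdot,z)\,ds$, so the orthogonality condition for the data $-\partial_\nu\Phi(\cdot,z)$ collapses exactly to $v_j(z)=0$ for every eigenfunction $v_j$. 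Hence the interior problem is solvable precisely when $z$ is a common zero of all Neumann eigenfunctions at frequency $k$.

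Both directions then follow. If $k$ is not a Neumann eigenvalue, the interior problem is uniquely solvable, the target is a genuine bounded interior solution, and since Herglotz waves are dense in the space of interior Helmholtz solutions, the regularized minimizer $g_{z,k}^{\delta}$ yields a Herglotz wave whose $H^1(D)$ norm converges to that of the exact solution and therefore stays bounded; here I would use the standard fact that for a compact operator the Tikhonov minimizer stays bounded exactly when the right-hand side lies in the range of the relevant factor $\mathcal{G}$, coupling $\epsilon=\epsilon(\delta)\to 0$ with the noise level through the discrepancy principle so that $F_k^{\delta}\to F_k$. Conversely, if $k$ is a Neumann eigenvalue, the eigenfunctions are real-analytic and nontrivial, so their zero sets have measure zero; thus for almost every $z\in D$ some $v_j(z)\neq 0$, the target falls outside the range, and the regularization theory forces $\|H_k g_{z,k}^{\delta}\|_{H^1(D)}\to\infty$.

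The main obstacle will be making the norm equivalence rigorous: the regularization theory naturally controls a boundary quantity through the factorization, whereas the statement concerns the full $\|H_k g\|_{H^1(D)}$. Bridging the two requires a two-sided stability estimate identifying $\|H_k g\|_{H^1(D)}$ with the relevant boundary-trace norm, and this is precisely where the well-posedness of the interior Neumann problem (equivalently, $k$ not being an eigenvalue) re-enters as a coercivity estimate. By contrast, the almost-everywhere qualifier becomes routine once the reduction to $v_j(z)=0$ is in place, as it rests only on unique continuation for the eigenfunctions; the remaining delicate point is the coordinated passage to the limit in $\delta$ and $\epsilon$ ensuring that the noisy operator $F_k^{\delta}$ does not destroy the range dichotomy.
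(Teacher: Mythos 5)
The paper itself does not prove this lemma: it is imported verbatim from \cite{Arens09} and \cite{LiuSun}, so there is no in-paper argument to compare against. Measured against those references, the skeleton of your proposal is the right one, and the central computation is correct: Green's second identity applied to a Neumann eigenfunction $v_j$ and $\Phi(\cdot,z)$ does give $v_j(z)=-\int_{\partial D}v_j\,\partial_\nu\Phi(\cdot,z)\,ds$, so the Fredholm compatibility condition for the interior Neumann problem with data $-\partial_\nu\Phi(\cdot,z)$ is exactly $v_j(z)=0$, and real-analyticity of nontrivial eigenfunctions makes this fail for a.e.\ $z\in D$. That reduction, plus unique solvability when $k$ is not an eigenvalue, is indeed the heart of the matter.

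The genuine gap is the step you describe as ``the standard fact that for a compact operator the Tikhonov minimizer stays bounded exactly when the right-hand side lies in the range of the relevant factor $\mathcal{G}$.'' This is not a standard fact, and it does not follow from the one-sided factorization $F_k=\mathcal{G}\mathcal{N}$ that you write down: Tikhonov regularization of $F_kg=\Phi^\infty(\cdot,z)$ controls $\|g\|_{L^2(\mathbb{S}^{m-1})}$, and nothing in the one-sided factorization transfers that control to the interior quantity $\|H_kg^\delta_{z,k}\|_{H^1(D)}$ or links its boundedness to membership of the target in $\mathcal{R}(\mathcal{G})$. Establishing precisely this equivalence is the main theorem of \cite{Arens09}, and it requires the symmetric factorization $F_k=\mathcal{G}S\mathcal{G}^*$ with a coercive (up to compact perturbation) middle operator, together with the range identity $\mathcal{R}(\mathcal{G})=\mathcal{R}((F_k^*F_k)^{1/4})$ and a singular-system analysis of the regularized minimizers; moreover, at a Neumann eigenvalue $F_k$ may fail to be injective, so even the well-definedness and limiting behaviour of the family $g^\delta_{z,k}$ needs the separate treatment given in \cite{LiuSun}. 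You correctly flag this bridge as ``the main obstacle,'' but flagging it is not closing it, and without it neither implication of the lemma is actually proved: the forward direction needs that the minimizer's Herglotz wave converges to the interior solution (density of Herglotz waves alone only gives existence of \emph{some} good kernel, not that Tikhonov selects it), and the converse needs that failure of the range condition forces blow-up of the specific regularized family.
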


\begin{rem}
	Due to the fact that the scatterer $D$ is unknown, it is impossible to identify the Neumann eigenvalues based on the behavior of $\|H_k g_{z,k}^{\delta}\|_{H^1(D)}$. Noting that $\|g_{z,k}^{\delta} \|_{L^2(\mathbb{S}^{m-1})}$ has the same behavior to $\|H_k g_{z,k}^{\delta}\|_{H^1(D)}$,  one can use the behavior of $\|g_{z,k}^{\delta} \|_{L^2(\mathbb{S}^{m-1})}$ to determine the Neumann eigenvalues.
\end{rem}

Next, we proceed to recover the corresponding Neumann eigenfunctions. Recall that the Herglotz wave function is defined by
\begin{equation}\label{eq:Herglotz}
	v_{g,k}(x):= H_k g(x) =\int_{\mathbb{S}^{m-1}} \mathrm{e}^{\mathrm{i}kx\cdot d} g(d) \,{\rm d}s(d), \quad x\in\mathbb{R}^m,
\end{equation}
where $g\in L^2(\mathbb{S}^{m-1})$ is called the Herglotz kernel of $v_{g,k}$. We first show that the Herglotz wave function can be used to approximate the solution of the Helmholtz equation by the following lemma.

\begin{lem}\label{le:HergDense}\cite{Weck04}
	Let $D \subset \mathbb{R}^m$ be a bounded domain of class $C^{\alpha, 1}$, $\alpha\in\mathbb{N}\cup\{0\}$ with a connected complement $\mathbb{R}^m\backslash\overline{D}$. Let $\mathbb{H}$ be the space of all Herglotz wave functions of the form \eqref{eq:Herglotz}. Define, respectively,
	\[
	\mathbb{H}(D):=\{ u|_{D}:u\in\mathbb{H} \},
	\]
	and
	\[
	\mathbb{U}(D):=\{u\in C^{\infty}(D): \Delta u +k^2 u=0 \text{ in } D\}.
	\]
	Then $\mathbb{H}(D)$ is dense in $\mathbb{U}(D)\cap H^{\alpha+1}(D)$ with respect to the $H^{\alpha+1}(D)$-norm.
\end{lem}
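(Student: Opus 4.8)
The plan is to argue by duality, reducing the density assertion to a uniqueness statement for a radiating exterior field, which is then dispatched by Rellich's lemma together with unique continuation. First I would observe that $\mathbb{U}(D)\cap H^{\alpha+1}(D)$ is a \emph{closed} subspace of the Hilbert space $H^{\alpha+1}(D)$: if $u_n\to u$ in $H^{\alpha+1}(D)$ with $\Delta u_n+k^2 u_n=0$, then $\Delta u+k^2 u=0$ distributionally, and interior elliptic regularity (hypoellipticity of $\Delta+k^2$) forces $u\in C^{\infty}(D)$, so the limit again lies in $\mathbb{U}(D)$. Since every Herglotz wave function is an entire solution of the Helmholtz equation, its restriction to the bounded domain $D$ gives $\mathbb{H}(D)\subseteq \mathbb{U}(D)\cap H^{\alpha+1}(D)$, and by the Hahn--Banach theorem it suffices to show that any bounded linear functional $F$ on $\mathbb{U}(D)\cap H^{\alpha+1}(D)$ that annihilates $\mathbb{H}(D)$ must vanish identically.

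Given such an $F$, interchanging $F$ with the integral over $\mathbb{S}^{m-1}$ in \eqref{eq:Herglotz} (legitimate because $F$ is bounded and $d\mapsto \mathrm{e}^{\mathrm{i}k\,\cdot\,d}$ is continuous into $H^{\alpha+1}(D)$) shows that $F$ annihilates \emph{every} plane wave $x\mapsto \mathrm{e}^{\mathrm{i}kx\cdot d}$, $d\in\mathbb{S}^{m-1}$. The next step is to represent $F$ through boundary data: by Green's representation formula a solution $u$ is determined by its Cauchy data $(u|_{\partial D},\partial_\nu u|_{\partial D})$, so $F$ may be written as $F(u)=\int_{\partial D}(\varphi\,\partial_\nu u+\psi\, u)\,\mathrm{d}s$ for suitable boundary densities $\varphi,\psi$. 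Forming the combined layer potential $w$ built from these densities yields a radiating solution of the Helmholtz equation in $\mathbb{R}^m\setminus\overline{D}$ whose far-field pattern satisfies $w^{\infty}(-d)=c_m\,F(\mathrm{e}^{\mathrm{i}k\,\cdot\,d})$ for a fixed constant $c_m$, by the known far-field asymptotics of the fundamental solution. Hence $F\equiv 0$ on plane waves is equivalent to $w^{\infty}\equiv 0$. By Rellich's lemma and unique continuation --- here the hypothesis that $\mathbb{R}^m\setminus\overline{D}$ is \emph{connected} is exactly what propagates the vanishing throughout the exterior --- we conclude $w\equiv 0$ in $\mathbb{R}^m\setminus\overline{D}$. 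The jump relations then identify $(\varphi,\psi)$ with the interior Cauchy data of $w$, which is itself a Helmholtz solution in $D$, and applying Green's second identity to $w$ and an arbitrary $u\in\mathbb{U}(D)\cap H^{\alpha+1}(D)$ gives $F(u)=0$, as desired.

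The step I expect to be the main obstacle is the boundary representation of $F$ in the correct Sobolev scale: one must match the trace spaces and the mapping properties of the single- and double-layer potentials to the regularity index $\alpha+1$ and to the boundary regularity $C^{\alpha,1}$, so that the densities $\varphi,\psi$ lie in the right dual spaces and $w$ has well-defined Cauchy data and far-field pattern. This regularity bookkeeping is precisely where the hypothesis $\partial D\in C^{\alpha,1}$ enters. An alternative and more constructive route would bypass the duality by a Runge-type approximation: first approximate $u$ in $H^{\alpha+1}(D)$ by entire Helmholtz solutions (a Lax--Malgrange/Runge theorem whose validity again rests on the connectedness of the complement), then expand each entire solution in the spherical-wave basis $\{j_n(k|x|)Y_n^{l}(\hat{x})\}$ and truncate; by the Jacobi--Anger expansion every such truncation is itself a Herglotz wave function, and the truncations converge in $H^{\alpha+1}(D)$ since $D$ is bounded. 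Either way, the connectedness of $\mathbb{R}^m\setminus\overline{D}$ is the indispensable geometric hypothesis.
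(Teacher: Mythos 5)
The paper gives no proof of this lemma---it is imported verbatim from \cite{Weck04}---so there is no internal argument to compare against; your duality scheme (closedness of $\mathbb{U}(D)\cap H^{\alpha+1}(D)$, Hahn--Banach, reduction to plane waves by commuting the functional with the Herglotz integral, boundary representation of the annihilating functional, vanishing far field, Rellich's lemma with connectedness of $\mathbb{R}^m\setminus\overline{D}$, and the jump relations plus Green's second identity) is precisely the standard proof and is essentially the route of the cited reference. The only step that needs genuine care is the one you already flag: transporting $F$ to the Cauchy data requires that the Green representation operator be bounded from $H^{\alpha+1/2}(\partial D)\times H^{\alpha-1/2}(\partial D)$ into $H^{\alpha+1}(D)$ (so that the densities $\varphi,\psi$ land in the correct negative-order dual spaces while the far-field kernel, being smooth, still pairs with them), and this is exactly where the hypothesis $\partial D\in C^{\alpha,1}$ enters; your Runge/Jacobi--Anger alternative is an equally valid constructive route.
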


The following theorem plays an important role to determine the Neumann eigenfunctions.

\begin{thm}\label{th:Eigenf}
	Assume that $D$ is of class $C^{0,1}$ when $m=2$, and of class $C^{1,1}$ when $m=3$, and $\mathbb{R}^m\backslash\overline{D}$ is connected. Suppose that $k$ is a Neumann eigenvalue of $-\Delta$ in $D$ and $u_k$ is the corresponding eigenfunction. Let $\varepsilon \in \mathbb{R}_{+}$ be sufficiently small. Then there exists $g_{\varepsilon}\in L^2(\mathbb{S}^{m-1})$ such that
	\begin{equation}\label{eq:optimal}
		\|F_k g_{\varepsilon} \|_{L^2(\mathbb{S}^{m-1})}=\mathcal{O}(\varepsilon) \ \ \text{and} \ \   \|v_{g_{\varepsilon},k} \|_{H^1(D)}=\mathcal{O}(1),
	\end{equation}
	where $F_k$ is the far field operator defined by \eqref{eq:ffope} and $v_{g_{\varepsilon},k}$ is the Herglotz wave function defined by \eqref{eq:Herglotz} with the kernel $g_{\varepsilon}\in L^2(\mathbb{S}^{m-1})$.

	On the other hand, if we suppose that $k\in\mathbb{R}_+$ is a Neumann eigenvalue and $g_{\varepsilon}$ satisfies \eqref{eq:optimal}, then the Herglotz wave $v_{g_{\varepsilon},k}$ is an approximation to the Neumann eigenfunction $u_k$ associated with the Neumann eigenvalue $k$ in $H^1(D)$-norm.
\end{thm}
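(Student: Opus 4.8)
The plan is to establish the two implications separately: the forward (existence) statement rests on the density of Herglotz waves (Lemma~\ref{le:HergDense}), while the converse is a weak-compactness argument built on the injectivity of the far-field-to-far-field map.

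\medskip

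\noindent\textbf{Existence of $g_\varepsilon$.} Since $k$ is a Neumann eigenvalue, the eigenfunction $u_k$ is a smooth solution of the Helmholtz equation in $D$ (hence $u_k\in\mathbb{U}(D)$, and $u_k\in H^2(D)$ when $m=3$ by elliptic regularity up to the $C^{1,1}$ boundary) satisfying $\partial u_k/\partial\nu=0$ on $\partial D$. By Lemma~\ref{le:HergDense} I approximate $u_k$ by Herglotz waves $v_{g_n,k}$ in the $H^{\alpha+1}(D)$-norm, with $\alpha=0$ for $m=2$ and $\alpha=1$ for $m=3$. The decisive observation is that $F_k g_n$ from \eqref{eq:ffope} is exactly the far-field pattern of the scattered field $u_n^s$ produced by the incident Herglotz wave $v_{g_n,k}$, and the sound-hard condition in \eqref{eq:forwardpr} forces $\partial u_n^s/\partial\nu=-\partial v_{g_n,k}/\partial\nu$ on $\partial D$. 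Because every $v_{g_n,k}$ and $u_k$ solves the same Helmholtz equation, the Neumann trace is continuous on $\{v\in H^1(D):\Delta v\in L^2(D)\}$ equipped with the graph norm; as $v_{g_n,k}\to u_k$ in $H^1(D)$ and $\Delta v_{g_n,k}=-k^2 v_{g_n,k}\to\Delta u_k$ in $L^2(D)$, I obtain $\partial v_{g_n,k}/\partial\nu\to\partial u_k/\partial\nu=0$ in $H^{-1/2}(\partial D)$. Continuity of the solution operator for the exterior Neumann problem and of the far-field map then gives $\|F_k g_n\|_{L^2(\mathbb{S}^{m-1})}\to0$, while $\|v_{g_n,k}\|_{H^1(D)}\to\|u_k\|_{H^1(D)}=\mathcal{O}(1)$. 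Taking $g_\varepsilon:=g_n$ for $n$ large enough yields \eqref{eq:optimal}.

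\medskip

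\noindent\textbf{The converse.} I introduce the bounded linear map $B:H^{-1/2}(\partial D)\to L^2(\mathbb{S}^{m-1})$ sending a datum $\psi$ to the far-field pattern of the radiating solution of the exterior Neumann problem with $\partial_\nu w=\psi$; unique solvability of this problem together with Rellich's lemma make $B$ \emph{injective}, and with this notation $F_k g_\varepsilon=-B(\partial v_{g_\varepsilon,k}/\partial\nu)$. Writing $v_\varepsilon:=v_{g_\varepsilon,k}$, the bound $\|v_\varepsilon\|_{H^1(D)}=\mathcal{O}(1)$ lets me extract a subsequence with $v_\varepsilon\rightharpoonup v_*$ weakly in $H^1(D)$, the limit again solving the Helmholtz equation in $D$; as above, weak convergence in the graph norm gives $\partial v_\varepsilon/\partial\nu\rightharpoonup\partial v_*/\partial\nu$ in $H^{-1/2}(\partial D)$. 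Since $B$ is bounded, hence weakly continuous, $B(\partial v_\varepsilon/\partial\nu)\rightharpoonup B(\partial v_*/\partial\nu)$, whereas $B(\partial v_\varepsilon/\partial\nu)=-F_k g_\varepsilon\to0$ strongly by \eqref{eq:optimal}. Therefore $B(\partial v_*/\partial\nu)=0$, and injectivity of $B$ forces $\partial v_*/\partial\nu=0$ on $\partial D$, so that $v_*$ is a Neumann eigenfunction for the eigenvalue $k^2$ (or zero). To promote this to a genuine approximation I upgrade to strong convergence: Rellich--Kondrachov gives $v_\varepsilon\to v_*$ in $L^2(D)$, and Green's first identity for $w_\varepsilon:=v_\varepsilon-v_*$ reads $\|\nabla w_\varepsilon\|_{L^2(D)}^2=k^2\|w_\varepsilon\|_{L^2(D)}^2+\langle\partial w_\varepsilon/\partial\nu,\overline{w_\varepsilon}\rangle_{\partial D}$, where the first term vanishes in the limit and the boundary pairing is handled using $\partial v_*/\partial\nu=0$, the weak convergence $\partial v_\varepsilon/\partial\nu\rightharpoonup0$ in $H^{-1/2}(\partial D)$, and the compact trace embedding $H^1(D)\hookrightarrow L^2(\partial D)$. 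This yields $v_\varepsilon\to v_*$ in $H^1(D)$, so the lower part of the bound $\|v_\varepsilon\|_{H^1(D)}=\mathcal{O}(1)$ guarantees $v_*\neq0$, i.e. $v_{g_\varepsilon,k}$ approximates a genuine Neumann eigenfunction $u_k$ in $H^1(D)$.

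\medskip

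\noindent\textbf{Main obstacle.} The hard part is the converse, and within it the passage from the smallness of the far-field to the vanishing of the limiting Neumann trace. The operator $B$ has an unbounded inverse --- this is precisely the ill-posedness of the inverse problem --- so one cannot conclude $\partial v_\varepsilon/\partial\nu\to0$ by any quantitative stability estimate; the argument must instead route through weak convergence and the injectivity of $B$. The secondary delicate point is the boundary term in the Green identity, where a weak--weak pairing has to be shown to vanish. This is the step most sensitive to the regularity hypotheses $C^{0,1}$ (resp. $C^{1,1}$) on $\partial D$, since one needs just enough control on the Neumann traces --- beyond their mere $H^{-1/2}$-boundedness --- to exploit the strong $L^2(\partial D)$-convergence of $w_\varepsilon$.
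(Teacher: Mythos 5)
Your forward (existence) argument is essentially the paper's: approximate $u_k$ by Herglotz waves via Lemma~\ref{le:HergDense}, pass to the Neumann trace, and invoke well-posedness of the exterior problem to control the far field. That part is fine (your use of the graph-norm continuity of the Neumann trace is in fact slightly more careful than the paper's bare appeal to the trace theorem).

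The converse is where you diverge from the paper, and where there is a genuine gap. Your route is soft: weak compactness in $H^1(D)$, injectivity and weak continuity of the Neumann-to-far-field map $B$ to get $\partial v_*/\partial\nu=0$, then an attempted upgrade to strong $H^1(D)$ convergence via Green's identity. The upgrade fails at exactly the point you flag: in
\[
\|\nabla w_\varepsilon\|_{L^2(D)}^2=k^2\|w_\varepsilon\|_{L^2(D)}^2+\bigl\langle \partial w_\varepsilon/\partial\nu,\overline{w_\varepsilon}\bigr\rangle_{\partial D},
\]
the pairing is $H^{-1/2}(\partial D)\times H^{1/2}(\partial D)$, you only have \emph{weak} convergence $\partial w_\varepsilon/\partial\nu\rightharpoonup 0$ in $H^{-1/2}(\partial D)$, and the compact trace embedding gives $w_\varepsilon\to 0$ strongly only in $L^2(\partial D)$, not in $H^{1/2}(\partial D)$. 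A weak $H^{-1/2}$ sequence paired against a sequence that is merely bounded in $H^{1/2}$ and strong in $L^2$ need not vanish (e.g.\ $a_n=n^{1/2}e^{in\theta}$, $b_n=n^{-1/2}e^{in\theta}$ on $\mathbb{S}^1$ gives $\langle a_n,\overline{b_n}\rangle=2\pi$), and you cannot get an $L^2(\partial D)$ bound on $\partial_\nu v_{g_\varepsilon,k}$ from $\|v_{g_\varepsilon,k}\|_{H^1(D)}=\mathcal{O}(1)$ alone since $\|g_\varepsilon\|_{L^2(\mathbb{S}^{m-1})}$ may blow up. Note also that without strong convergence you cannot rule out $v_*=0$, so the conclusion collapses. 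This is precisely where the paper injects a quantitative ingredient instead: the quantitative Rellich theorem of Bl\aa sten--Liu \cite{Blasten16} (see Remark~\ref{rem:nn1}) gives $\|\partial u^s_{g_\varepsilon,k}/\partial\nu\|_{L^2(\partial D)}\le\psi(\varepsilon)$ with $\psi(\varepsilon)\to 0$ of logarithmic type --- a \emph{strong-norm} smallness of the Neumann trace up to the boundary --- and then the Fredholm-alternative estimate $\|v_{g_\varepsilon,k}+\mathbb{V}\|_{H^1(D)/\mathbb{V}}\le C\|\partial_\nu v_{g_\varepsilon,k}\|_{H^{-1/2}(\partial D)}$ finishes the argument. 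The regularity hypotheses $C^{0,1}$ ($m=2$) and $C^{1,1}$ ($m=3$) exist to guarantee the H\"older continuity of the scattered field up to $\partial D$ needed for that theorem, not to control the Green-identity pairing. To repair your proof you would either have to import the same quantitative Rellich estimate (at which point the compactness scaffolding becomes unnecessary) or find some other mechanism forcing strong convergence of the Neumann traces.
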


\begin{proof}
	Let $u_k$ be a normalized Neumann eigenfunction associated with the Neumann eigenvalue $k$,
	then $u_k\in H^1(D)$ is a solution to the Neumann eigenvalue problem
	\begin{equation*}
		\Delta u_k +k^2 u_k=0 \ \ \text{in} \ D, \quad
		\frac{\partial u_k}{\partial \nu}=0 \ \  \text{on} \  \partial D.
	\end{equation*}
	According to the denseness in Lemma \ref{le:HergDense}, for any sufficiently small $\varepsilon>0$, there exists $g_{\varepsilon}\in L^2(\mathbb{S}^{m-1})$ such that
	\begin{equation*}
		\| v_{g_{\varepsilon},k} -u_k \|_{H^1(D)}< \varepsilon.
	\end{equation*}
	where $v_{g_{\varepsilon},k}$ is the Herglotz wave function  with the kernel $g_{\varepsilon}$.
	By the triangle inequality, one can find that
	\begin{equation*}
		\| v_{g_{\varepsilon},k} \|_{H^1(D)}\leq \|v_{g_{\varepsilon},k}  -u_k \|_{H^1(D)}+\| u_k \|_{H^1(D)}
		<\varepsilon +\| u_k \|_{H^1(D)}.
	\end{equation*}
	Similarly, we have
	\begin{equation*}
		\| v_{g_{\varepsilon},k}  \|_{H^1(D)}\geq \| u_k \|_{H^1(D)}-\| v_{g_{\varepsilon},k}  -u_k \|_{H^1(D)}
		>\| u_k \|_{H^1(D)}-\varepsilon.
	\end{equation*}
	Since $u_k\in H^1(D)$ is a normalized eigenfunction, using the last two equations, one can obtain that
	\begin{equation*}
		\| v_{g_{\varepsilon},k}  \|_{H^1(D)}=\mathcal{O}(1).
	\end{equation*}
	In the following, we let $ a \lesssim  b$ stand for $a\leq C b$, where $C>0$ is a generic constant.  By the trace theorem, we can derive that
	\begin{equation}\label{eq:dd1}
		\left \|\frac{\partial v_{g_{\varepsilon},k} }{\partial \nu} -\frac{\partial u_k}{\partial \nu}\right\|_{H^{-1/2}(\partial D)}\leq \varepsilon.
	\end{equation}
	Noting that $\partial u_k/ \partial \nu=0$ on $\partial D$, we clearly have from \eqref{eq:dd1} that
	\begin{equation}\label{eq:numann}
		\left \|\frac{\partial v_{g_{\varepsilon},k}  }{\partial \nu} \right\|_{H^{1/2}(\partial D)}
		\lesssim \varepsilon.
	\end{equation}
	It is directly verified that $F_k g_{\varepsilon}$ is the far-field pattern of the exterior scattering problem \eqref{eq:forwardpr} associated with the incident field $v_{g_{\varepsilon},k} $. Hence, based on the well-posedness of the forward scattering problem \eqref{eq:forwardpr}, together with \eqref{eq:numann}, one can conclude that
	\begin{equation}\label{eq:dd2}
		\|F_k g_{\varepsilon} \|_{L^2(\mathbb{S}^{m-1})}=\mathcal{O}(\varepsilon),
	\end{equation}
	which proves the first part of the theorem.

	Next, we prove that the Herglotz wave $v_{g_{\varepsilon},k}$ is an approximation to the Neumann eigenfunction $u_k$.  Let $u^i=v_{g_{\varepsilon},k}$, $u^s_{g_{\varepsilon},k}$ and $u^t_{g_{\varepsilon},k}$ be, respectively, the incident, scattered, and total wave fields. It is clear that one has
	\begin{equation}\label{eq:scaSys}
		\left\{
		\begin{array}{ll}
			\Delta v_{g_{\varepsilon},k}+k^2 v_{g_{\varepsilon},k}=0\quad &\text{in} \ D, \medskip \\
			\displaystyle \frac{\partial v_{g_{\varepsilon},k}}{\partial \nu}=-\frac{\partial u^s_{g_{\varepsilon},k}}{\partial \nu} \quad &\text{on} \ \partial D.
		\end{array}
		\right.
	\end{equation}
	
	As we mentioned before, $F_kg_\varepsilon$ is the far-field pattern of $u_{g_\varepsilon, k}^s$. According to \eqref{eq:optimal} and the quantitative Rellich theorem (cf. \cite{Blasten16} as well as Remark~\ref{rem:nn1} in what follows), there is
	\begin{equation}\label{eq:ssn2}
		\left\| \frac{\partial u^s_{g_{\varepsilon},k}}{\partial \nu} \right\|_{L^2(\partial D)}\leq \psi (\varepsilon),
	\end{equation}
	where $\psi$ is a real-valued function of logarithmic type and satisfies $\psi (\varepsilon)\to 0$ as $\varepsilon\to +0$.
	Let $u_k$ be a Neumann eigenfunction associated with $k$, namely,
	\begin{equation}\label{eq:NeuSys}
		\left\{
		\begin{array}{ll}
			\Delta u_{k}+k^2 u_{k}=0\quad &\text{in} \ D, \medskip \\
			\displaystyle \frac{\partial u_{k}}{\partial \nu}=0 \quad &\text{on} \ \partial D.
		\end{array}
		\right.
	\end{equation}
	With the help of the Fredholm theory for elliptic boundary value problems (see e.g. Theorem 4.10 in \cite{Mclean}), the solution set of the system \eqref{eq:scaSys} is given by $v_{g_{\varepsilon},k}+\mathbb{V}$ with $\mathbb{V}$ being the finite-dimensional eigen-space to \eqref{eq:NeuSys}. Moreover,
	\begin{equation*}
		\|v_{g_{\varepsilon},k}+ \mathbb{V} \|_{H^1(D)/\mathbb{V}}\leq C\left\| \frac{\partial u^s_{g_{\varepsilon},k}}{\partial \nu} \right\|_{H^{-1/2}(\partial D)}\leq C\psi (\varepsilon),
	\end{equation*}
	which clearly indicates that $v_{g_\varepsilon, k}$ is indeed an approximation to a Neumann eigenfunction associated with $k$.
	
	The proof is complete.
\end{proof}

\begin{rem}\label{rem:nn1}
	
	In the proof of Theorem~\ref{th:Eigenf}, we make use of the so-called quantitative Rellich theorem which states that if the far-field is smaller than $\varepsilon$ (i.e. $\|F_kg_{\varepsilon}\|_{L^2(\mathbb{S}^{m-1})}\leq \varepsilon$ in our case), then the scattered field (i.e. $u_{g_\varepsilon, k}^s$ in our case) is also small up to the boundary of the scatterer in the sense of \eqref{eq:ssn2}, where $\psi(\varepsilon)$ is the stability function in \cite{Blasten16}. In fact, in \cite{Blasten16}, the quantitative Rellich theorem is established for medium scattering. But as long as the scattered field is H\"older continuous up the boundary of the scatterer, the result in \cite{Blasten16} can be straightforwardly extended to the case of obstacle scattering. In Theorem~\ref{th:Eigenf}, the regularity assumptions on $\partial D$ guarantees that the scattered field is indeed H\"older continuous up to $\partial D$. In fact, let us consider \eqref{eq:forwardpr} to ease the exposition. In two dimensions, $\partial u/\partial\nu|_{\partial D}\in L^2(\partial D)$. It follows from regularity results for elliptic problems in Lipschitz domains \cite[Theorem~4.24]{Mclean} that $u|_{\partial D}\in H^1(\partial D)$, and hence from \cite[Theorem~6.12]{Mclean} and the accompanying discussion that $u\in H_{loc}^{3/2}(\mathbb{R}^2\backslash\overline{D})$. Then by the standard Sobolev embedding theorem, one can directly verify that $u$ is H\"older continuous up to $\partial D$ and therefore $u^s=u-u^i$ is also H\"older continuous up to $\partial D$. In three dimensions, since $\partial D\in C^{1,1}$, one can make use of the \cite[Theorem~4.18]{Mclean} to show that $u\in H_{loc}^2(\mathbb{R}^3\backslash\overline{D})$. Again by the Sobolev embedding theorem, one can see that $u$ and hence $u^s$ are H\"older continuous up to $\partial D$. We believe the regularity assumption in three dimensions can be relaxed to be purely Lipschitz as that in two dimensions. However, this is not the focus of the current study. We also refer to \cite{RL08} for a different approach of quantitatively continuing the far-field data to the boundary of the scatterer.
	

\end{rem}

By Theorem \ref{th:Eigenf} and normalization if necessary, we can say that the following optimization problem:
\begin{equation}\label{eq:AlterOptimization0}
	\min\limits_{g\in L^2(\mathbb{S}^{m-1})} \|F_k g \|_{L^2(\mathbb{S}^{m-1})} \quad \text{    s.t.  } \|v_{g,k} \|_{L^2(D)}=\mathcal{O}(1)
\end{equation}
exists at least one solution $g \in L^2(\mathbb{S}^{m-1})$ when $k$ is a Neumann eigenvalue in $D$.
Since $D$ is unknown, it is unpractical to solve the optimization problem \eqref{eq:AlterOptimization0} with the constraint term $\|v_{g,k} \|_{L^2(D)}=\mathcal{O}(1)$.
Thus, we consider the following optimization problem instead:
\begin{equation}\label{eq:AlterOptimization}
	\min\limits_{g\in L^2(\mathbb{S}^{m-1})} \|F_k g \|_{L^2(\mathbb{S}^{m-1})} \quad \text{    s.t.  } \|v_{g,k} \|_{L^2(B)}=\mathcal{O}(1),
\end{equation}
where $B$ is bounded domain such that $D\subset B$. In practice, one can simply choose $B$ to be a large ball containing $D$. In summarizing our discussion above, we can formulate the following scheme ({\bf Scheme I}) to determine the Neumann eigenvalues and the corresponding eigenfunctions. It is emphasized that all the results in this section hold equally for the sound-soft case.
\begin{table}[htbp]
	\begin{tabular}{p{1.5cm} p{11cm}}
		\toprule
		\multicolumn{2}{l}{\textbf{Scheme \uppercase\expandafter{\romannumeral1}:}\ Determination of Neumann eigenvalues and eigenfunctions}\\
		\midrule
		Step 1 & Collect a family of far-field data $u^{\infty,\delta}(\hat{x},d,k)$ for $(\hat{x},d,k)\in \mathbb{S}^{m-1}\times \mathbb{S}^{m-1} \times V$, where $V$ is an open interval in $\mathbb{R}_{+}$.\\
		Step 2 & Pick a point $z \in D$ (a priori information) and for each $k\in V$, find the minimizer $g_{z,k}^{\delta}$ of \eqref{eq:TikF}.\\ 
		Step 3 & Plot $\| g_{z,k}^{\delta}\|_{L^2(\mathbb{S}^{m-1})}$ against $k\in V$ and find the Neumann eigenvalues from the peaks of the graph.\\
		Step 4 & For each determined Neumann eigenvalue, solve the optimization problem \eqref{eq:AlterOptimization} and obtain the Herglotz kernel function $g_k$ by using the gradient total least square method as proposed in \cite{LLWW}.\\
		Step 5 & With the computed Herglotz kernel function $g_k$, then the corresponding eigenfunction $u_k$ can be approximated by the Herglotz wave function according to the definition \eqref{eq:Herglotz}.\\
		\bottomrule
	\end{tabular}
\end{table}

\section{Newton-type method based on the interior resonant modes}
In this section, we develop a novel Newton-type method to reconstruct the shape of a sound-hard obstacle based on the interior resonant modes determined in the previous section.

Let $\gamma$ be a closed curve (in $\mathbb{R}^2$) or a closed surface (in $\mathbb{R}^3$), which is parameterized by
\begin{equation*}
	\begin{cases}
		& \gamma= \{z(\phi):  \phi\in[0,\,2\pi]\}, \qquad \qquad \qquad \quad \, m=2,\medskip\\
		& \gamma= \{z(\theta, \phi): (\theta,\phi)\in[0,\, \pi]\times[0,\,2\pi]\}, \quad m=3.
	\end{cases}
\end{equation*}
Here, $\gamma$ signifies the boundary of a bounded domain which shall be involved in the Newton's iteration in what follows.
For a fixed total field $u$, we define the operator $G$ that  maps the boundary contour $\gamma$ onto the trace of $\partial u/\partial \nu$ on $\gamma$, that is,
\begin{equation}\label{eq:mm1}
	G: \gamma \mapsto \frac{\partial u}{\partial \nu}.
\end{equation}
In terms of the above operator $G$,  we seek the parameterization $z$ of the boundary contour $\gamma$ such that the total field $u$ satisfies the Neumann boundary condition, i.e.,
\begin{equation}\label{eq:Neumann}
	G(z)=0,  \quad z\in \gamma.
\end{equation}

Next, we introduce the Newton-type method to recover the boundary of the obstacle. Let $\gamma_n$ be the approximation to the boundary $\partial D$ at the $n$-iteration, $n=0,1,2,\cdots$. Following the idea of Newton method, we replace the previous nonlinear equation \eqref{eq:Neumann} by the linearized equation
\begin{equation}\label{eq:linear}
	G(z_n)+ G'(z_n)h=0, \quad z_n\in \gamma_n,
\end{equation}
where $h$ denotes the shift.
Actually,  the key point for solving the last linearized equation is to determine the Fr\'{e}chet derivative of the operator $G$. Inspired by the work \cite{Kress2007}, we seek to improve and  present a simpler  Fr\'{e}chet derivative in two and three dimensions, respectively.

For the two-dimensional case, we assume that $z=(z^{(1)}, z^{(2)})^{\top}$. Then the exterior normal vector can be represented by
\begin{equation*}
	\nu:=\frac{z_{\phi}^{\bot}}{|z_{\phi}^{\bot}|},
\end{equation*}
where
$ z_{\phi}^{\bot}=\left( {\partial z^{(2)}}/{\partial \phi},\, -{\partial z^{(1)}}/{\partial \phi} \right)^{\top}$.  Now we characterize the Fr\'echet derivative in the following theorem.

\begin{thm}\label{thm:2D}
	The operator $G: C^2([0,2\pi]) \rightarrow C([0,2\pi])$  is  Fr\'{e}chet differentiable and its derivative is given by
	\begin{equation*}
		G'(z)h=\frac{1}{|z_{\phi}|}(I-\nu\nu^{\top})h_{\phi}^{\bot}\cdot \nabla u +\nu \cdot ( \nabla \nabla^{\top} u \, h),
	\end{equation*}
	where $I$ is the $2\times 2$ identity matrix and  $h_{\phi}^{\bot}=(\partial h^{(2)}/ \partial \phi,  -\partial h^{(1)}/ \partial \phi)^\top$.
\end{thm}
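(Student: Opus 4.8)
The plan is to write $G(z)(\phi)=\nu(\phi)\cdot\nabla u(z(\phi))$ explicitly as a composition of the parameterization $z$, the normalized normal field $\nu=z_\phi^\bot/|z_\phi^\bot|$, and the gradient $\nabla u$, and then to differentiate this composition in $z$ along a direction $h$. Since $u$ is a \emph{fixed} solution of the Helmholtz equation, it is real-analytic in a neighbourhood of the curve $\gamma$, so $\nabla u$ together with its first derivatives is as smooth as needed and carries no $z$-dependence of its own. The only two sources of $z$-dependence to track are therefore (i) the point $z(\phi)$ at which $\nabla u$ is evaluated, and (ii) the normal $\nu$, which depends on $z_\phi$ through the normalization. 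I would compute the two resulting contributions separately and then add them.

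For contribution (i), replacing $z$ by $z+th$ and differentiating $\nabla u(z(\phi)+th(\phi))$ at $t=0$ produces the directional derivative $(\nabla\nabla^\top u)\,h$ of the gradient; dotting against the unperturbed normal $\nu$ yields the term $\nu\cdot(\nabla\nabla^\top u\,h)$. For contribution (ii), I would use the standard formula for the derivative of the normalization map $w\mapsto w/|w|$, namely $\tfrac{d}{dt}\bigl(w/|w|\bigr)=|w|^{-1}(I-\hat w\hat w^\top)\delta w$ with $\hat w=w/|w|$. Taking $w=z_\phi^\bot$, $\delta w=h_\phi^\bot$, and noting that $|z_\phi^\bot|=|z_\phi|$ and $\widehat{z_\phi^\bot}=\nu$, this gives $\tfrac{d\nu}{dt}\big|_{t=0}=|z_\phi|^{-1}(I-\nu\nu^\top)h_\phi^\bot$. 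Dotting against $\nabla u$ produces exactly the first term of the claimed formula, and summing (i) and (ii) reproduces the stated expression for $G'(z)h$.

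The computation just sketched only delivers the Gâteaux derivative, so the main work is the rigorous upgrade to Fréchet differentiability as a bounded linear map $C^2([0,2\pi])\to C([0,2\pi])$. Two things need to be checked. First, linearity and boundedness of $h\mapsto G'(z)h$ in the $C^2$-norm: the formula involves $h$ through $h$ itself and through one derivative $h_\phi$, while $u,\nabla u,\nabla\nabla^\top u$ are bounded near $\gamma$ and $|z_\phi|$ is bounded away from zero for a regular $C^2$ parameterization, so $\|G'(z)h\|_{C([0,2\pi])}\lesssim\|h\|_{C^2([0,2\pi])}$ follows directly. Second, and this is the real obstacle, one must estimate the remainder $G(z+h)-G(z)-G'(z)h$ uniformly in $\phi$ and show it is $o(\|h\|_{C^2})$.

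To close the remainder estimate I would Taylor-expand each of the two nonlinearities to second order: the point-evaluation $h\mapsto\nabla u(z(\phi)+h(\phi))$, whose quadratic remainder is controlled by the Lipschitz continuity of the Hessian, hence by $\|u\|_{C^3}$ in a fixed neighbourhood of $\gamma$; and the normalization $w\mapsto w/|w|$, which is smooth away from the origin, so its quadratic remainder is controlled by $|w|^{-1}$ on a neighbourhood of $z_\phi^\bot$. The nondegeneracy $z_\phi\neq 0$, guaranteed by $z$ being a regular $C^2$ curve, keeps all denominators bounded below uniformly in $\phi$ and in small perturbations $h$, so collecting the second-order terms gives a bound of the form $C\|h\|_{C^2}^2$, establishing both the Fréchet differentiability and the asserted form of $G'(z)$.
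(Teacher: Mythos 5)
Your proposal follows essentially the same route as the paper's proof: a product-rule decomposition of $G(z+h)-G(z)$ into the variation of the normal $\nu=z_\phi^\bot/|z_\phi^\bot|$ (differentiated via the normalization map $w\mapsto w/|w|$, giving the $|z_\phi|^{-1}(I-\nu\nu^\top)h_\phi^\bot$ term) and the variation of the evaluation point of $\nabla u$ (giving the Hessian term $\nu\cdot(\nabla\nabla^\top u\,h)$). Your additional discussion of the uniform remainder estimate, the lower bound on $|z_\phi|$, and the boundedness of $h\mapsto G'(z)h$ from $C^2$ to $C^0$ is in fact more careful than the paper's argument, which relies on Taylor's formula with big-$\mathcal{O}$ notation and does not spell out these points.
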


\begin{proof}
	Recall that
	\begin{equation*}
		G(z)=\nu(z) \cdot \nabla u(z),
	\end{equation*}
	then we have the following decomposition
	\begin{equation}\label{eq:decomposition}
		G(z+h)-G(z)=(\nu(z+h)-\nu(z))\cdot \nabla u(z)+ \nu(z)\cdot (\nabla u(z+h)-\nabla u(z)).
	\end{equation}
	Noting that $\nu={z_{\phi}^{\bot}}/{|z_{\phi}^{\bot}|}$, using Taylor's formula, one can deduce that
	\begin{equation*}
		\begin{aligned}
			\nu(z+h)-\nu(z)&=\frac{z_{\phi}^{\bot}+h_{\phi}^{\bot}}{|z_{\phi}^{\bot}+h_{\phi}^{\bot}|}-\frac{z_{\phi}^{\bot}}{|z_{\phi}^{\bot}|}\\
			&=\frac{\partial } {\partial z_{\phi}^{\bot}} \left(\frac{z_{\phi}^{\bot}}{|z_{\phi}^{\bot}|}\right) \, h_{\phi}^{\bot}+\mathcal{O}(|h_{\phi}^{\bot}|^2)\\
			&=\frac{1}{|z_{\phi}^{\bot}|}(I-\nu\nu^{\top})h_{\phi}^{\bot}+\mathcal{O}(|h_{\phi}^{\bot}|^2).
		\end{aligned}
	\end{equation*}
	Similarly, we can derive that
	\begin{equation*}
		\begin{aligned}
			\nabla u(z+h)-\nabla u(z)
			&=\frac{\partial }{\partial z}(\nabla u(z))\, h+ \mathcal{O}(|h|^2)\\
			&=\nabla \nabla^{\top} u(z)\, h+ \mathcal{O}(|h|^2).
		\end{aligned}
	\end{equation*}
	Substituting the last two equations into \eqref{eq:decomposition} and using the definition of the Fr\'echet derivative,  one can deduce that
	\begin{equation*}
		G'(z)h=\frac{1}{|z_{\phi}|}(I-\nu\nu^{\top})h_{\phi}^{\bot}\cdot \nabla u +\nu \cdot ( \nabla \nabla^{\top} u \, h).
	\end{equation*}
\end{proof}

For the three-dimensional case, we define two orthogonal unit tangential vector fields $\tau_1$ and $\tau_2$ on the surface $\gamma$, that is,
\begin{equation*}
	\tau_1=\frac{z_{\theta}}{|z_{\theta}|}, \quad \tau_2=\frac{z_{\phi}-(z_{\phi} \cdot z_{\theta})z_{\theta}}{|z_{\phi}-(z_{\phi} \cdot z_{\theta})z_{\theta}|},
\end{equation*}
where $z_{\theta}=\partial z/\partial \theta$ and $z_{\phi}=\partial z/\partial \phi$.
Therefore, the normal vector can be defined by
\begin{equation*}
	\nu:=\frac{\tau_1\times \tau_2}{|\tau_1\times \tau_2|}
	=\frac{z_{\theta}\times z_{\phi}}{|z_{\theta}\times z_{\phi}|}.
\end{equation*}
In the following theorem, we characterize the Fr\'echet derivative of $G$ in three dimension.

\begin{thm}\label{thm:3D}
	The operator $G: C^2([0,\pi]\times[0,2\pi]) \rightarrow C([0,\pi]\times[0,2\pi])$  is  Fr\'{e}chet differentiable and
	its derivative is given by
	\begin{equation*}
		\begin{aligned}
			G'(z)h&=\frac{1}{|z_{\theta}\times z_{\phi}|}\left((I-\nu\nu^{\top})(h_{\theta}\times z_{\phi}+z_{\theta}\times h_{\phi}) \right)\cdot \nabla u +\nu \cdot ( \nabla \nabla^{\top} u \, h),
		\end{aligned}
	\end{equation*}
	where $I$ is the $3\times 3$ identity matrix,  $h_{\theta}=\partial h/ \partial \theta$ and $h_{\phi}=\partial h/ \partial \phi$.
\end{thm}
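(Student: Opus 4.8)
The plan is to mirror the structure of the two-dimensional argument in the proof of Theorem~\ref{thm:2D}, the only genuinely new ingredient being the linearization of the three-dimensional normal $\nu=(z_{\theta}\times z_{\phi})/|z_{\theta}\times z_{\phi}|$. First I would write $G(z)=\nu(z)\cdot\nabla u(z)$ and split the increment exactly as in \eqref{eq:decomposition}, namely
\begin{equation*}
	G(z+h)-G(z)=\bigl(\nu(z+h)-\nu(z)\bigr)\cdot\nabla u(z)+\nu(z)\cdot\bigl(\nabla u(z+h)-\nabla u(z)\bigr).
\end{equation*}
The second summand is treated precisely as before: a Taylor expansion gives $\nabla u(z+h)-\nabla u(z)=\nabla\nabla^{\top}u(z)\,h+\mathcal{O}(|h|^2)$, which contributes the term $\nu\cdot(\nabla\nabla^{\top}u\,h)$ to the derivative.

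For the first summand the key step is to differentiate the map $z\mapsto\nu(z)$, and I would do this by splitting off the cross product. Setting $N(z):=z_{\theta}\times z_{\phi}$ so that $\nu=N/|N|$, bilinearity of the cross product yields
\begin{equation*}
	N(z+h)=(z_{\theta}+h_{\theta})\times(z_{\phi}+h_{\phi})=N(z)+\underbrace{h_{\theta}\times z_{\phi}+z_{\theta}\times h_{\phi}}_{=:\,\delta N}+\,h_{\theta}\times h_{\phi},
\end{equation*}
so that $\delta N$ is the first-order change of the unnormalized normal and $h_{\theta}\times h_{\phi}=\mathcal{O}(|h|^2)$. It then remains to linearize the normalization map $w\mapsto w/|w|$, whose Fr\'echet derivative at $w=N$ is the projection $|N|^{-1}(I-\hat N\hat N^{\top})=|z_{\theta}\times z_{\phi}|^{-1}(I-\nu\nu^{\top})$. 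Composing these two linearizations via the chain rule gives
\begin{equation*}
	\nu(z+h)-\nu(z)=\frac{1}{|z_{\theta}\times z_{\phi}|}(I-\nu\nu^{\top})\bigl(h_{\theta}\times z_{\phi}+z_{\theta}\times h_{\phi}\bigr)+\mathcal{O}(|h|^2).
\end{equation*}

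Substituting the two expansions back into the decomposition and collecting the terms linear in $h$ produces exactly the asserted formula for $G'(z)h$, while the quadratic remainders assemble into the $\mathcal{O}(|h|^2)$ error certifying Fr\'echet differentiability from $C^2$ to $C$. I expect the main technical point to be confirming that every remainder is genuinely $\mathcal{O}(|h|^2)$ \emph{uniformly} on $[0,\pi]\times[0,2\pi]$: this requires that $u$ be of class $C^2$ in a neighbourhood of the surface, so that $\nabla\nabla^{\top}u$ is continuous there, and that $|z_{\theta}\times z_{\phi}|$ remain bounded away from zero, so that the normalization map is smooth at $N(z)$ with uniformly controlled derivative. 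The cross-product computation itself is routine; the only place demanding care is bookkeeping the composition of the two derivatives—of the cross product and of the normalization—and verifying that the projection $(I-\nu\nu^{\top})$ ends up in the correct position, which is dictated by applying the chain rule to $\nu=N/|N|$.
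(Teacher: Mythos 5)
Your argument is correct and reaches the paper's formula, but you organize the one non-trivial step differently. The paper linearizes $\nu=(z_{\theta}\times z_{\phi})/|z_{\theta}\times z_{\phi}|$ by Taylor-expanding directly in the variables $z_{\theta}$ and $z_{\phi}$, which first produces the expression $\frac{1}{|z_{\theta}\times z_{\phi}|}\bigl[h_{\theta}\times z_{\phi}+z_{\theta}\times h_{\phi}-\nu\bigl((z_{\phi}\times\nu)\cdot h_{\theta}+(\nu\times z_{\theta})\cdot h_{\phi}\bigr)\bigr]$ and then needs the scalar triple-product identities $(z_{\phi}\times\nu)\cdot h_{\theta}=\nu\cdot(h_{\theta}\times z_{\phi})$ and $(\nu\times z_{\theta})\cdot h_{\phi}=\nu\cdot(z_{\theta}\times h_{\phi})$ to recognize the result as $(I-\nu\nu^{\top})$ applied to $\delta N$. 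You instead factor the map as the bilinear cross product $N(z)=z_{\theta}\times z_{\phi}$ followed by the normalization $w\mapsto w/|w|$, whose derivative is the projection $|w|^{-1}(I-\hat w\hat w^{\top})$, so the chain rule delivers the projection form immediately and no triple-product manipulation is required. Both routes are valid; yours is a little cleaner and makes transparent why the same projection structure appears in Theorems~\ref{thm:2D} and~\ref{thm:3D}, while the paper's is more explicit about the intermediate partial derivatives. Your closing remarks on uniformity are also the right ones to make, and indeed go slightly beyond what the paper records: the remainder involves $|h_{\theta}|$ and $|h_{\phi}|$, so it is controlled by $\|h\|_{C^1}$ (hence by the $C^2$ norm in the stated setting), and one does need $|z_{\theta}\times z_{\phi}|$ bounded away from zero (a regular parameterization) and $u$ of class $C^2$ near the surface — both automatic in the intended application, where $u$ is an entire Herglotz wave function.
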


\begin{proof}

	Using Taylor's formula, one can derive that
	\begin{equation*}
		\begin{aligned}
			\nu(z+h)-\nu(z)
			&=\frac{(z+h)_{\theta}\times (z+h)_{\phi}}{|(z+h)_{\theta}\times (z+h)_{\phi}|}-\frac{z_{\theta}\times z_{\phi}}{|z_{\theta}\times z_{\phi}|}\\
			&=\frac{\partial}{\partial z_{\theta}}\left(\frac{z_{\theta}\times z_{\phi}}{|z_{\theta}\times z_{\phi}|}\right) h_{\theta}+\frac{\partial}{\partial z_{\phi}}\left(\frac{z_{\theta}\times z_{\phi}}{|z_{\theta}\times z_{\phi}|}\right) h_{\phi}+\mathcal{O}(|h_{\theta}|^2)+\mathcal{O}(|h_{\phi}|^2)\\
			&=\frac{1}{|z_{\theta}\times z_{\phi}|}\left[h_{\theta}\times z_{\phi}+z_{\theta}\times h_{\phi}-\nu(  (z_{\phi}\times \nu)\cdot h_{\theta}+ (\nu \times z_{\theta} )\cdot h_{\phi} ) \right]\\
			&\quad +\mathcal{O}(|h_{\theta}|^2)+\mathcal{O}(|h_{\phi}|^2).
		\end{aligned}
	\end{equation*}
	According to the mixed product, we have
	\begin{equation*}
		(z_{\phi}\times \nu)\cdot h_{\theta}=\nu \cdot (h_{\theta}\times z_{\phi}), \quad
		(\nu \times z_{\theta} )\cdot h_{\phi} =\nu \cdot (z_{\theta}\times h_{\phi}).
	\end{equation*}
	Similar to the two dimensional case, by a straight forward calculation, one can deduce that
	\begin{equation*}
		\begin{aligned}
			G'(z)h&=\frac{1}{|z_{\theta}\times z_{\phi}|}\left((I-\nu\nu^{\top})(h_{\theta}\times z_{\phi}+z_{\theta}\times h_{\phi}) \right)\cdot \nabla u +\nu \cdot ( \nabla \nabla^{\top} u \, h).
		\end{aligned}
	\end{equation*}
	
\end{proof}

To avoid calculating the direct scattering at each iteration, we use the eigenfunction $v_{g,k}$ to approximate the wave field $u$, namely,
\begin{equation*}
	u(x)\approx v_{g,k}(x)=\int_{\mathbb{S}^{m-1}} \mathrm{e}^{\mathrm{i}kx\cdot d} g(d) \,{\rm d}s(d), \quad x\in\mathbb{R}^m.
\end{equation*}
Then the gradient and the Hessian Matrix of $u$ defined in Theorem \ref{thm:2D} and \ref{thm:3D} can be represented by
\begin{equation*}
	\begin{aligned}
		&\nabla u= \mathrm{i}k  \int_{\mathbb{S}^{m-1}} d\, \mathrm{e}^{\mathrm{i}kx\cdot d} g(d) \,{\rm d}s(d), \\
		&\nabla \nabla^{\top} u= -k^2  \int_{\mathbb{S}^{m-1}} dd^{\top}\,\mathrm{e}^{\mathrm{i}kx\cdot d} g(d) \,{\rm d}s(d),
	\end{aligned}
\end{equation*}
where the Herglotz kernel $g$ is determined by solving the optimization problem \eqref{eq:AlterOptimization}.

\begin{rem}
	It is noted that the Fr\'echet derivatives presented in Theorem \ref{thm:2D} and \ref{thm:3D} are more simply and  intuitionistic compared with the formula defined in \cite{Kress2007}. Moreover,  based on the Herglotz wave approximation, it is very easy to numerically calculate the Fr\'echet derivatives since only cheap integrations are involved in the evaluation of the gradient and Hessian Matrix of $u$.
\end{rem}

We would like to emphasize that it is non-trivial to show that $G'$ is injective, namely, the operator $G'$ may be not invertible. Therefore, we employ the standard Tikhonov regularization scheme with a regularization parameter $\alpha$ for solving  linearized equation \eqref{eq:linear} at each iteration. As discussed above, we propose the following  Newton-type scheme.

\begin{alg}\label{eq:alg1}(Newton-type method)
	Assume that $\alpha>0$ is a regularization parameter. Given an
	initial parameter $z_0$, then the approximation $z_n$ is computed by
	\begin{equation*}
		z_{n}=z_{n-1}-\left(\alpha I+ (G'(z_{n-1}, k))^*\, G'(z_{n-1}, k)\right)^{-1} (G'(z_{n-1}, k))^*\,G(z_{n-1}, k),
	\end{equation*}
	where $k$ is  a Neumann eigenvalue.
\end{alg}

Noting that the Newton iteration method usually produces local minima for solving  the inverse obstacle problems.  To overcome local minimum and extend the convergence range, we propose a multifrequency Newton-type iterative algorithm.
\begin{alg}\label{eq:alg2}(Multifrequency Newton-type method)
	Assume that $\alpha>0$ is a regularization parameter. Given an
	initial parameter $z_0$, then the approximation $z_n$ is computed by
	\begin{equation*}
		z_{n}=z_{n-1}-\left(\alpha I+ (F'(z_{n-1}))^*\, F'(z_{n-1})\right)^{-1} (F'(z_{n-1}))^*\,F(z_{n-1}),
	\end{equation*}
	where $F(z_{n-1})=[G(z_{n-1}, k_1), G(z_{n-1}, k_2),\cdots, G(z_{n-1}, k_{\ell})]^{\top}$ and  $k_1, k_2, \cdots, k_{\ell}$ are $\ell$ different  Neumann eigenvalues.
\end{alg}

Finally, we would like to remark the extension to the sound-soft case. Indeed, by modifying the definition of the operator $G$ in \eqref{eq:mm1} via replacing $\partial u/\partial \nu$ by $u$, all the results in this section can be readily extended to the sound-soft case.

\section{Numerical experiments}
In this section, several numerical experiments are presented to verify the effectiveness and efficiency of the proposed methods. All the numerical results are conducted for recovering sound-hard obstacles, which present more challenges than the sound-soft case. To avoid inverse crime, the artificial far-field data are calculated by the finite element method, which is written as
\begin{equation*}
	\{ u^{\infty}(\hat{x}_i, d_j; k_s),\quad   i=1,2,\cdots, M, \ j=1,2,\cdots,N, \ s=1,2,\cdots,L. \}
\end{equation*}
Here $\hat{x}_i$ denotes the observation direction, $d_j$ denotes the incident direction and $k_s$ denotes the wavenumber. The observation and incident directions are chosen as the
equidistantly distributed points on the unit circle (2D) or unit sphere (3D). Moreover, the
wavenumbers are also equidistantly distributed in the open interval $V\subset \mathbb{R}_+$.
To test the stability of the method, for any fixed wavenumber, we add some random noise to the measurement matrix:
\begin{equation*}
	U^{\delta}=U+\delta \|U\|\frac{R_1+R_2 \mathrm{i}}{\|R_1+R_2 \mathrm{i}\|}, \quad U(i,j)=u^{\infty}(\hat{x}_i, d_j),
\end{equation*}
where $\delta>0$ is the noise level, and  $R_1$ and $R_2$ are two uniform random matrices that ranging from $-1$ to $1$.

\subsection{Recover the eigenvalues and eigenfunctions}
\begin{figure}
	\centering
	\subfigure[no noise]{\includegraphics[width=0.9\textwidth]
		{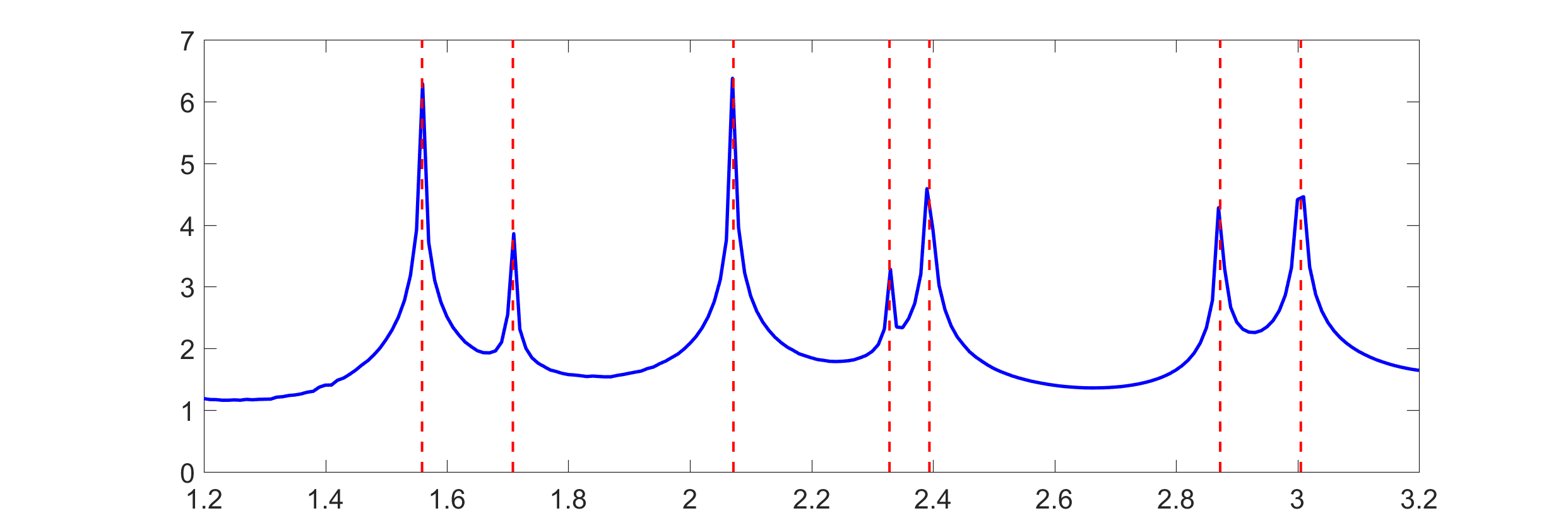}}\\
	\subfigure[1\% noise]{\includegraphics[width=0.9\textwidth]
		{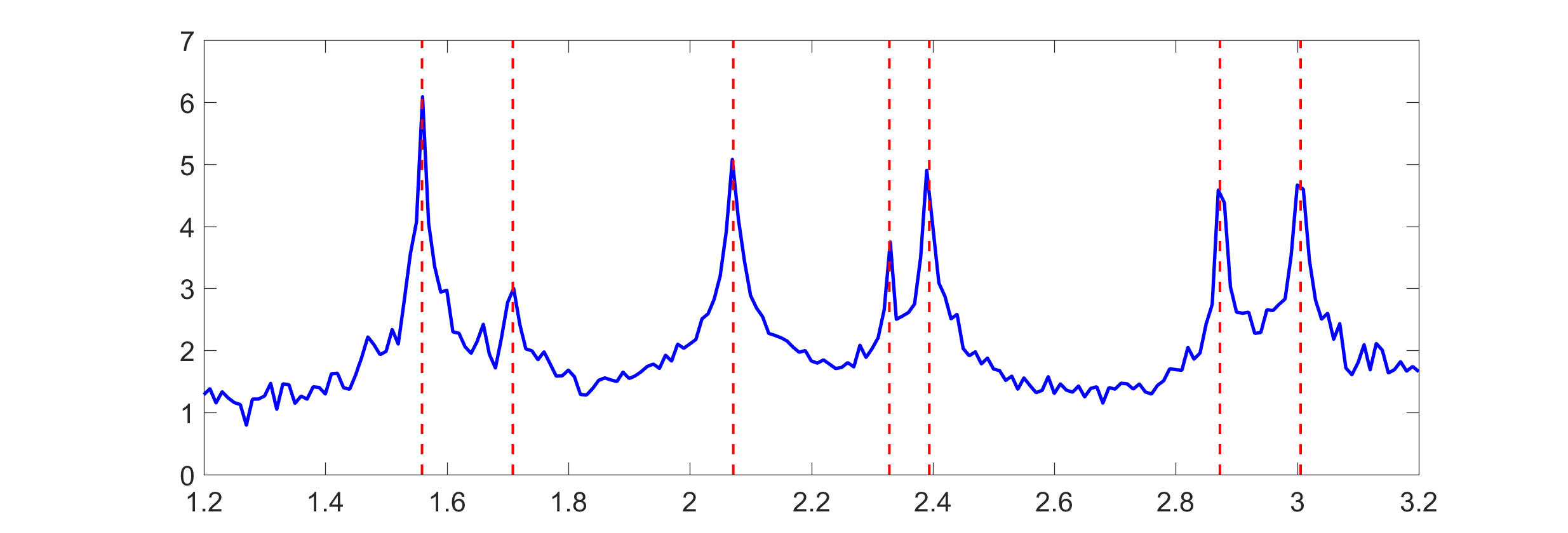}}
	\caption{\label{fig:eigenvalue} Plots of the indicator function with different noise level.
		The solid blue line denotes the value of $\|g_{z, k}^{\delta}\|_{L^2(\mathbb{S}^1)}$
		for $z=(1,\,1)$ against $k \in [1.2, \,  3.2]$; the dashed red lines denote the location of eigenvalues computed by the finite element method.
	}.
\end{figure}

In this part, we shall devote to recovering the eigenvalues and the corresponding eigenfunctions from the measured far field data. Here we consider a pear-shaped domain in two dimension, which is parameterized as
\begin{equation}\label{eq:pear}
	x(\phi)=(2+0.3\cos 3\phi)(\cos \phi, \ \sin \phi), \quad \phi\in[0,\, 2\pi].
\end{equation}
We set $M=64$, $N=64$ and $L=2000$, that is, the artificial far-field data are obtain at $64$ observation directions, $64$ incident directions and $2000$ equally distributed wavenumbers in the interval $[1.2, \, 3.2]$.

To begin with, we introduce the linear sampling method to determine the eigenvalues.
According to \textbf {Scheme I}, we plot the indicator function $\|g_{z, k}^{\delta}\|_{L^2(\mathbb{S}^1)}$ for $k\in [1.2, 3.2]$ in figure \ref{fig:eigenvalue}, where the interior test point is given by $z=(1,1)$.
In figure \ref{fig:eigenvalue} (a), the dashed red lines denote the location of eigenvalues computed by the finite element method and the solid blue line denotes the value of $\|g_{z, k}^{\delta}\|_{L^2(\mathbb{S}^1)}$.
As expected, one can observe that the indicator function (solid blue line) has clear spikes near the locations of the real eigenvalues (dashed red lines).
Thus, we can pick up the eigenvalues via the locations of the spikes.
Moreover,  we present the plot of the indicator function $\|g_{z, k}^{\delta}\|_{L^2(\mathbb{S}^1)}$ with $1\%$ noise in figure \ref{fig:eigenvalue} (b). Although the indicator function exhibits oscillating phenomenon, the eigenvalues can still be picked up from the locations of the spikes. Thus, the recovered Neumann eigenvalues are robust to the noise. In order to  exhibit the accuracy of the recovery results quantitatively, we also present the eigenvalues computed by the finite element method (FEM) in table \ref{tab1:eigenvalues}.
One can observe that the the linear sampling method (LSM) is valid to pick up  the eigenvalues.

\begin{table}[h]
	\center
	\caption{ The first seven real eigenvalues of the square domain. FEM:  computed the exact domain by the  finite element method; LSM: recovered from the far-field data by the linear sampling method. }\label{tab1:eigenvalues}
	\begin{tabular}{lccccccccc}
		\toprule
		Index of eigenvalue  & $1$     & $2$     &$3$   &$4$   &$5$    &$6$   &$7$  \\
		\midrule
		FEM:                &$1.559$   &$1.709$   &$2.072$  &$2.329$  &$2.394$    &$2.873$  & 3.006 \\
		LSM with no noise : &$1.559$   &$1.708$   &$2.071$  &$2.329$  &$2.394$    &$2.873$  & 3.005  \\
		LSM with 1\% nosie: &$1.562$   &$1.708$   &$2.073$  &$2.328$  &$2.395$    &$2.873$  & 3.004   \\
		\bottomrule
	\end{tabular}
\end{table}

\begin{figure}
	\centering
	\subfigure[$k_1=1.559$]{\includegraphics[width=0.32\textwidth]
		{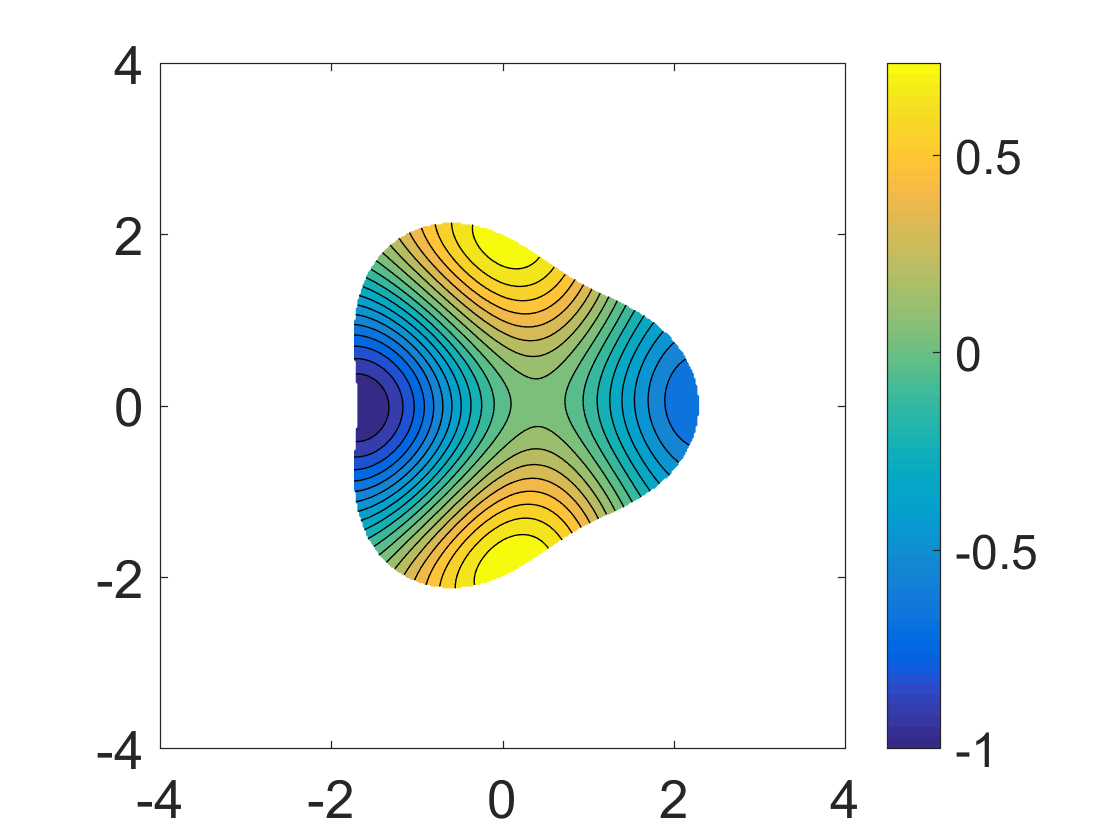}}
	\subfigure[$k_2=1.709$]{\includegraphics[width=0.32\textwidth]
		{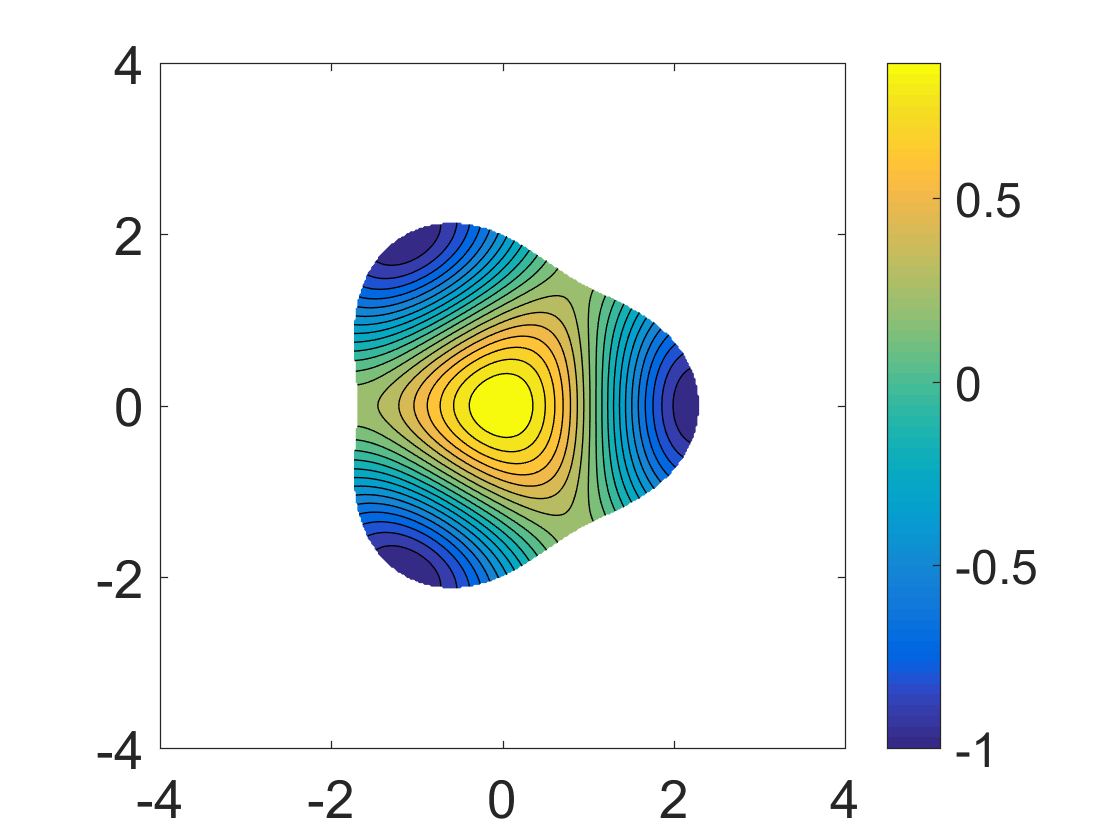}}
	\subfigure[$k_3=2.072$]{\includegraphics[width=0.32\textwidth]
		{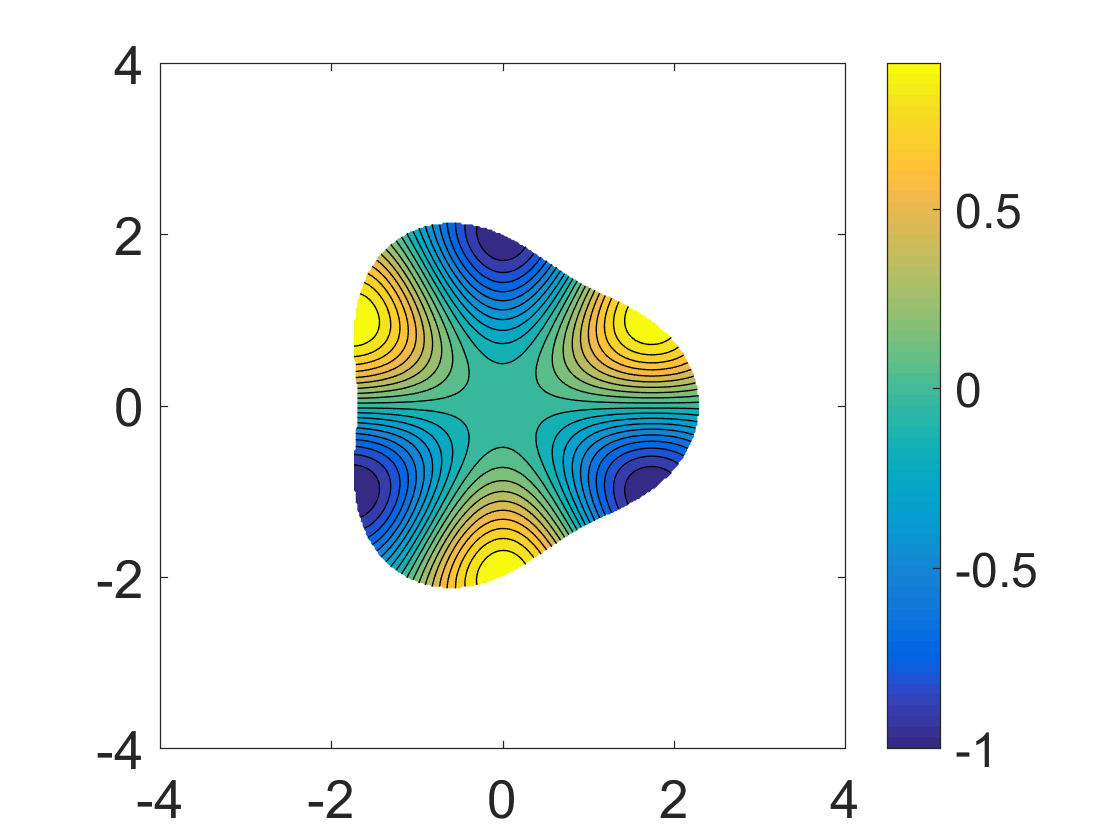}}\\
	\subfigure[$k_1=1.562$]{\includegraphics[width=0.32\textwidth]
		{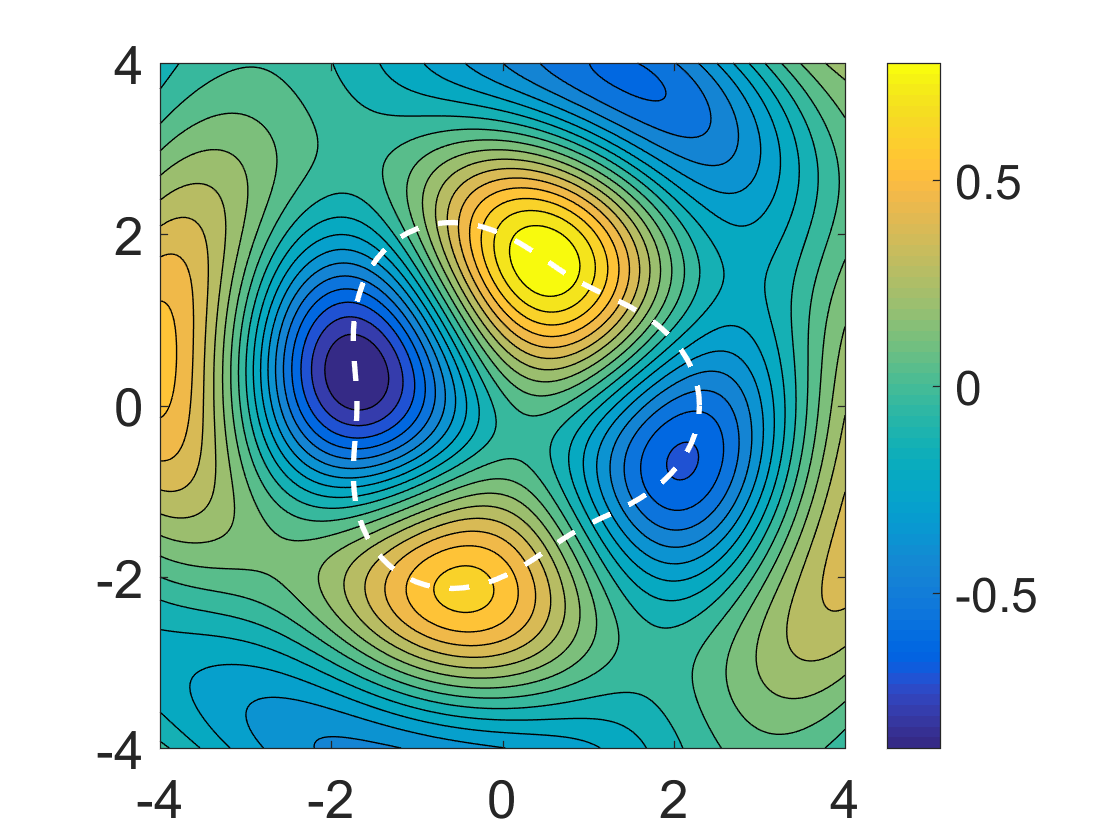}}
	\subfigure[$k_2=1.708$]{\includegraphics[width=0.32\textwidth]
		{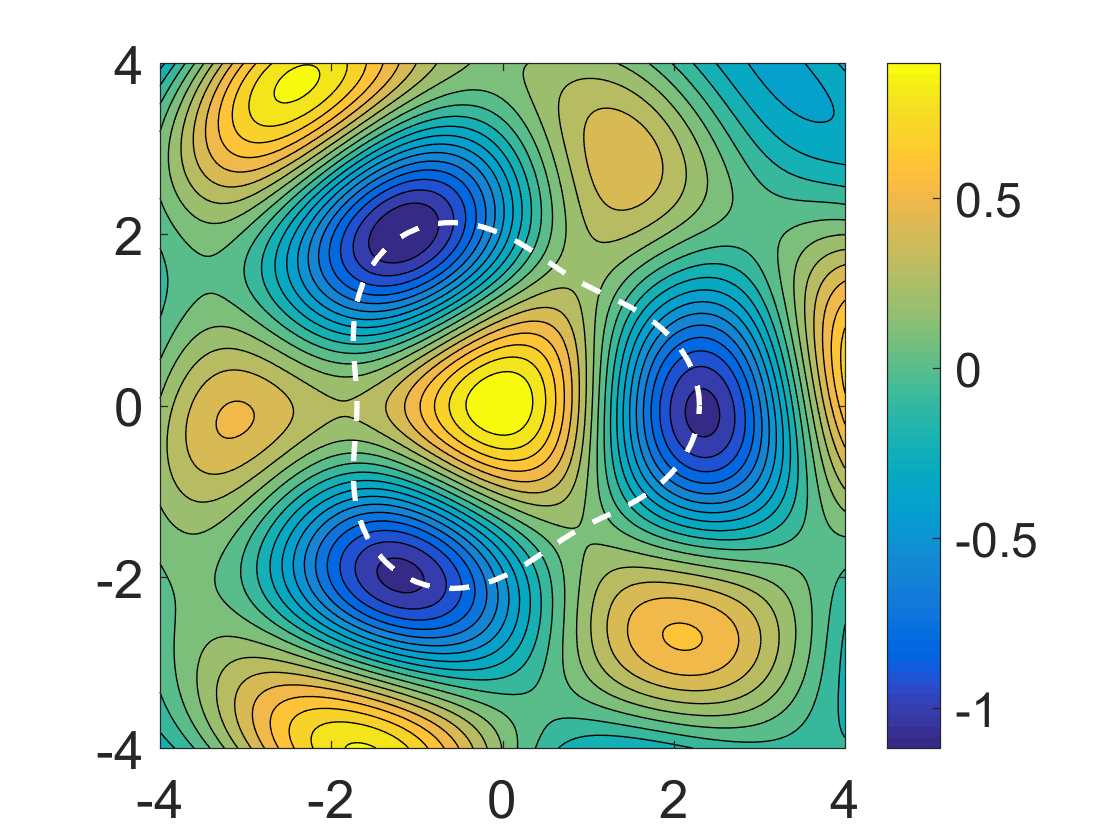}}
	\subfigure[$k_3=2.073$]{\includegraphics[width=0.32\textwidth]
		{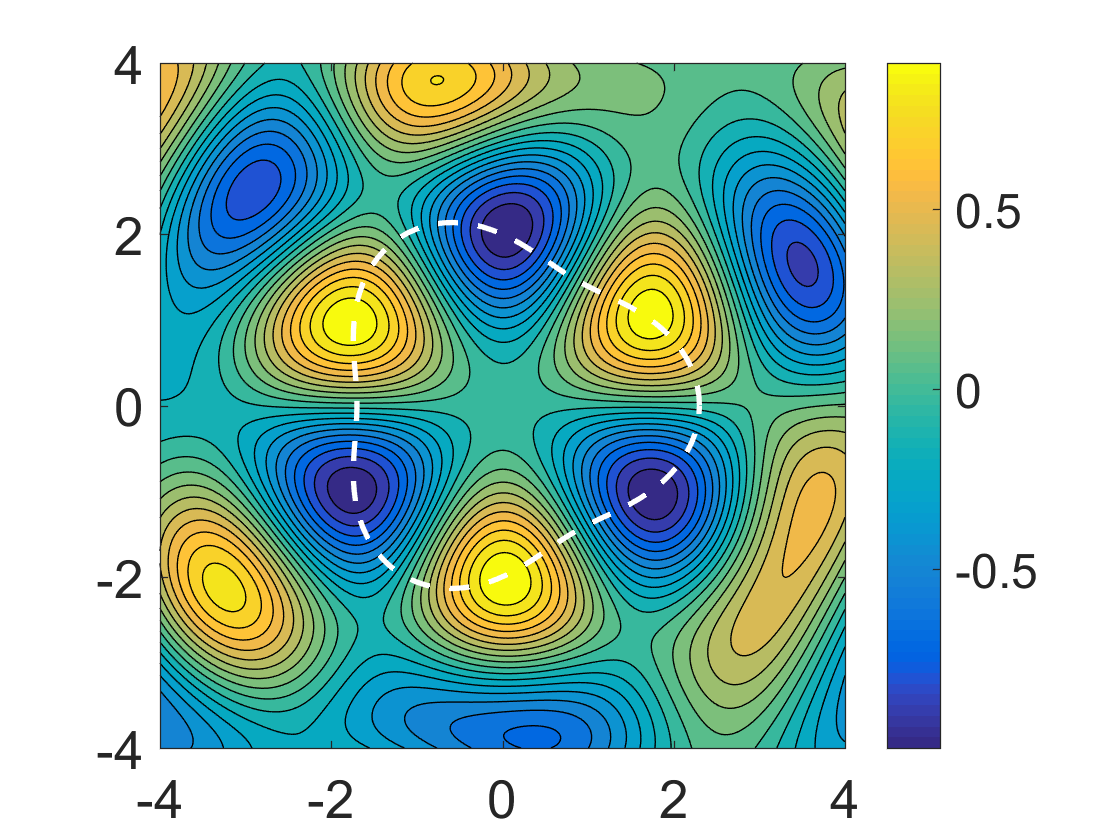}}
	\caption{\label{fig:eigenfunction} Contour plots of the exact and reconstructed eigenfunctions with different eigenvalues. The top row: the eigenfunctions recovered by using the finite element method;  the bottom row: the Herglotz wave recovered by using the gradient total least square method.
	}.
\end{figure}

Next, we aim to recover the corresponding  eigenfunctions from the far field data associated with Neumann eigenvalues.
Here, we shall use the gradient total least square method as proposed in \cite{LLWW}.
To reduce the oscillations, we add a penalty term to the optimization problem \eqref{eq:AlterOptimization}, that is,
\begin{equation*}
	\min\limits_{g\in L^2(\mathbb{S}^{m-1})} \|F_k g \|_{L^2(\mathbb{S}^{m-1})}+ \beta \| \nabla g \|_{L^2(\mathbb{S}^{m-1})} \quad \text{    s.t.  }\ \  \|v_{g,k} \|_{L^2(B)}=1,
\end{equation*}
where $\beta>0$ denotes the regularization parameter. 
In the numerical part, the regularization parameter is chosen as $\beta=10^{-2}$ and the radius of domain $B$ is given by $r=3$.
To test the stability, we add extra 1\% noise to the far field data associated with Neumann eigenvalues.
For comparison, we use the finite element method to compute the iterior Neumann eigenvalue problem \eqref{eq:inteNeu} and obtain the eigenfunctions $u_k$ with different eigenvalues, see the top row of Figure \ref{fig:eigenfunction}. The bottom row of Figure \ref{fig:eigenfunction} present the recovered Herglotz wave equations with recovered Neumann eigenvalues by using the gradient total least square method.
Here the dashed white lines denote the exact pear-shaped domain.
One can observe that the recovered Herglotz wave functions  are very close to the
exact Neumann eigenfunctions inside the obstacle.

\subsection{Reconstruct shapes}
In this section, we provide several two- and three-dimensional numerical examples to verify the efficient of Algorithm \ref{eq:alg1} and Algorithm \ref{eq:alg2}.

\subsubsection{2D reconstructions}
For the two-dimensional case, we choose an approximation of the boundary with the form
\begin{equation*}
	z(\phi)=\{r(\phi)\,(\cos \phi,\  \sin \phi): \phi\in[0, 2\pi]\},
\end{equation*}
where $r\in C^2([0,2\pi])$ is the radial function and it is given by trigonometric polynomials of order less than or equal $N_z\in \mathbb{N}$, i.e.,
\begin{equation*}
	r(\phi)=a_0+ \sum_{j=1}^{N_z} \left(a_j \cos j \phi+  b_j \sin j\phi\right), \quad \phi=[0,2\pi].
\end{equation*}
Here $a_0$, $a_1, a_2,..., a_{N_z}$ and $b_1, b_2,..., b_{N_z}$ are unknown Fourier coefficients.
In what follows, the order is chosen as $N_z=20$ and the stop criterion is set to be  $\|h\|_{L^2([0,2\pi])}<10^{-5}$. Moreover, the regularized parameter $\alpha$ is given by   $\alpha=10^{-5}$.
In what follows, the solid black lines denote the exact sound-hard obstacle, the dotted grey lines denote the initial guess and the red dashed lines denote the recovered shapes.

\begin{figure}
	\centering
	\subfigure[$R=1.1$]{\includegraphics[width=0.45\textwidth]
		{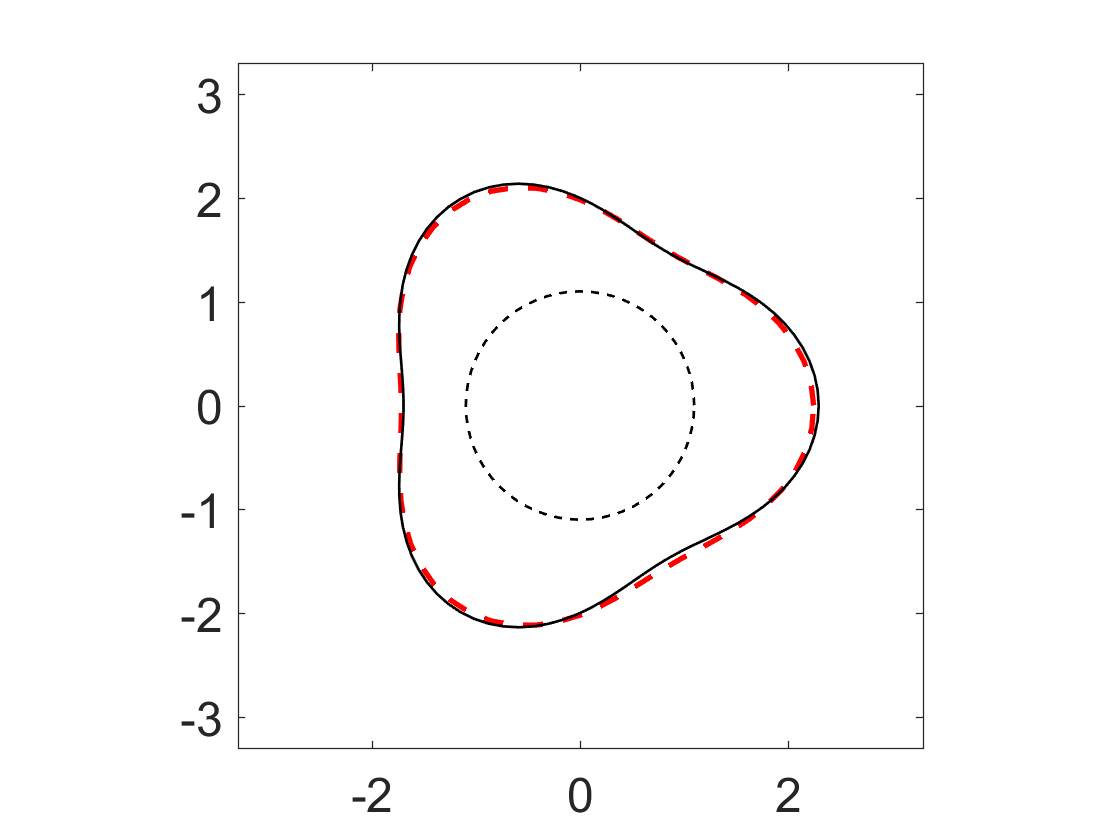}}
	\subfigure[$R=2.8$]{\includegraphics[width=0.45\textwidth]
		{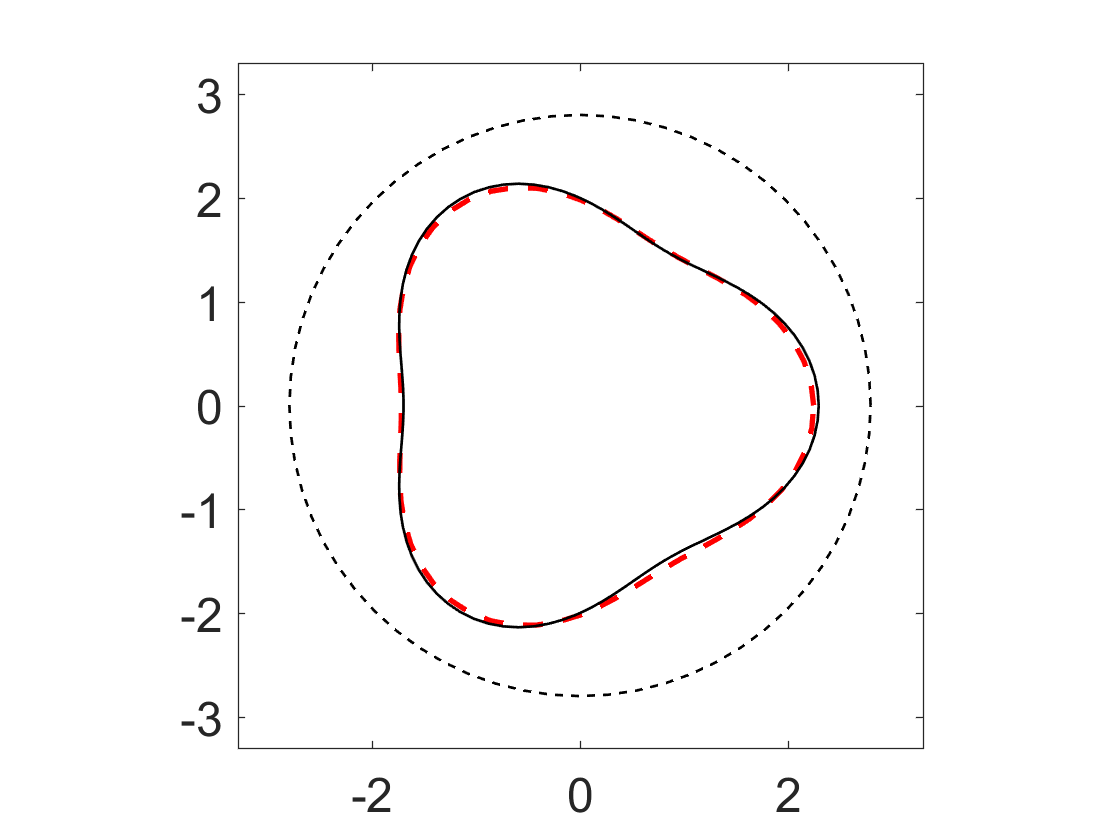}}\\
	\subfigure[$R=1.2$]{\includegraphics[width=0.45\textwidth]
		{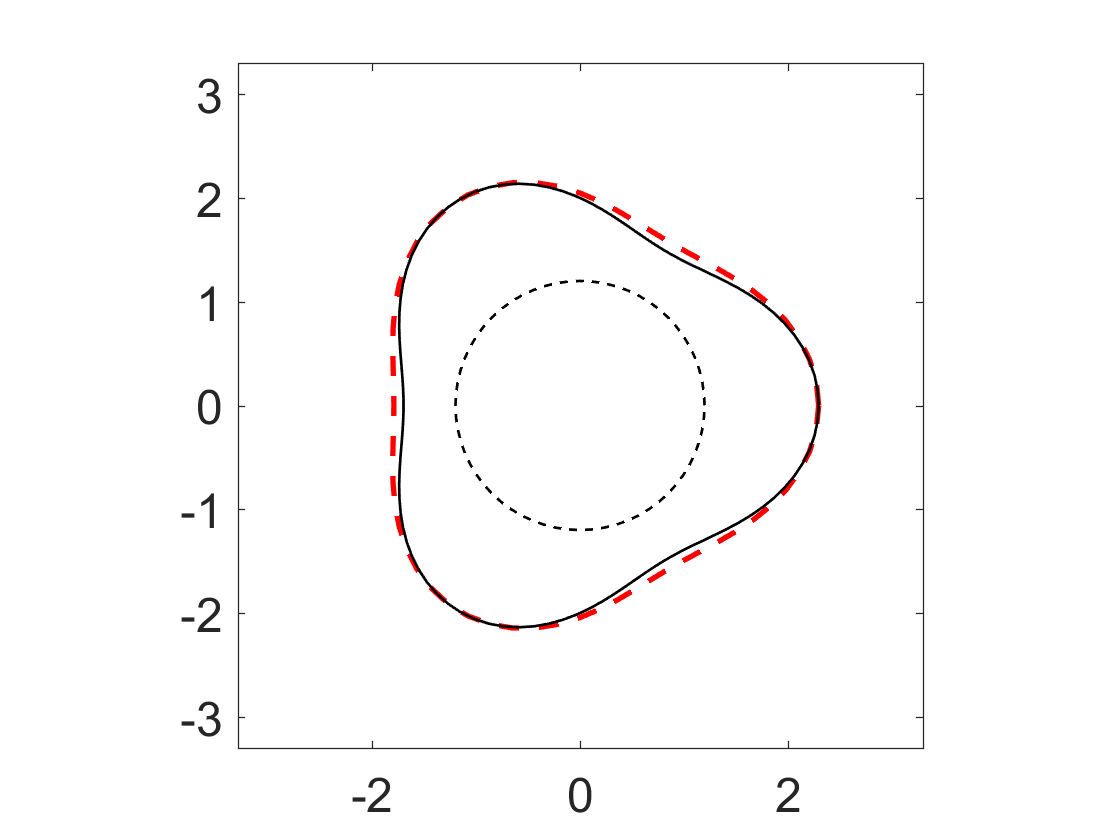}}
	\subfigure[$R=2.4$ ]{\includegraphics[width=0.45\textwidth]
		{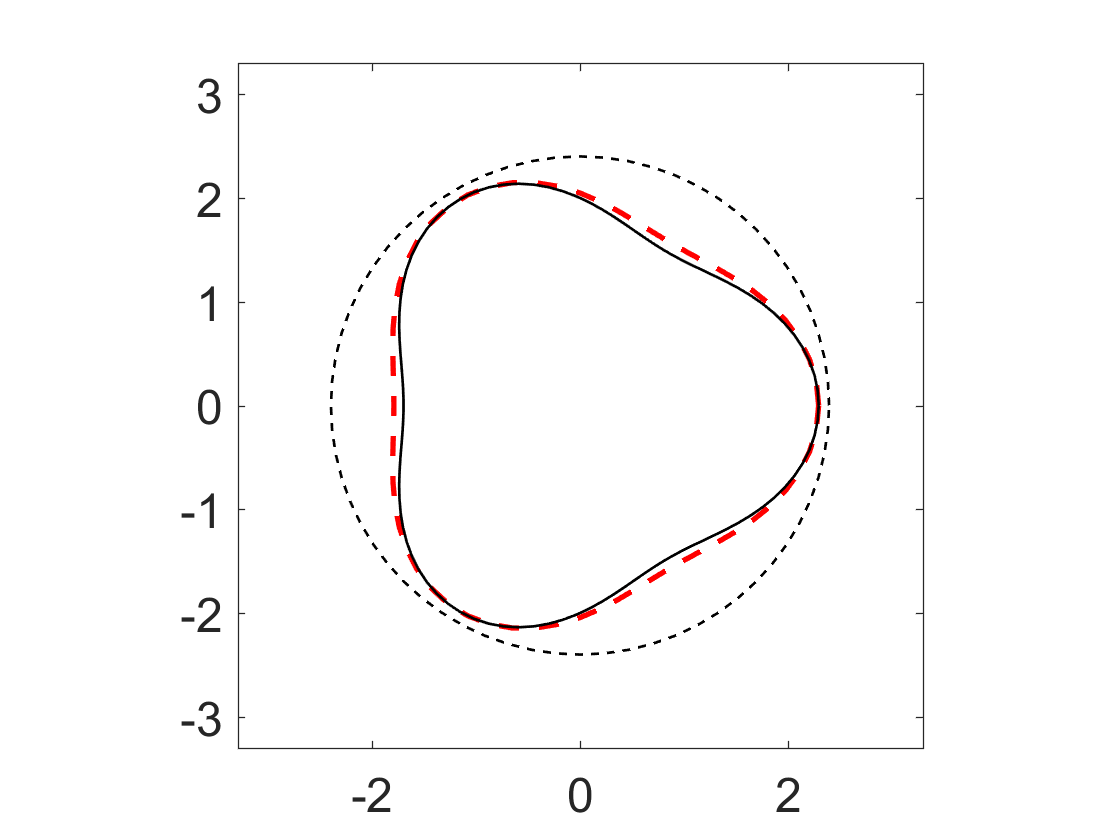}}\\
	\caption{\label{fig:pear-single} Reconstructions of the pear-shaped obstacle with different initial guesses $R$.
	}
\end{figure}

In the first example, we consider the pear-shaped domain as shown in \eqref{eq:pear}.
Since the initial value plays an important role for the Newton iterative method, we test the proposed Algorithm \ref {eq:alg1} with different initial guesses. Let the initial shape be a
circle centered at the origin  with a radius of $R$.
Figure \ref{fig:pear-single} presents  the reconstructions of the pear-shaped obstacle with different initial guesses.
It is noted that the stopping criteria is achieved between $15$ and $20$ iterations for this example.
Figure \ref{fig:pear-single}\,(a) and (b) respectively shows the recovery results for the minimum and maximum radius  with Neumann eigenvalue $k_1=1.562$. Correspondingly,  Figure 3(c)
and (c) respectively  shows the reconstructed results for the minimum and maximum radius with
Neumann eigenvalue $k_2= 1.708$. Through the numerical experiments, we find that for each Neumann eigenvalue there exists a interval of the radius such that the Newton iteration is convergent.
Moreover, we  test the multifrequency Newton-type method for recovering the shape. Figure \ref{fig:pear-multi} presents the reconstructed shapes via Algorithm \ref {eq:alg2}, where $R=1$ is the minimum radius and $R=3.2$ is the maximum radius. Here, we use four different Neumann eigenvalues. Comparing figure \ref{fig:pear-single} and \ref{fig:pear-multi}, one can observe that the multiple-frequency Newton-type approach has larger convergence range than single-frequency Newton-type approach.

\begin{figure}
	\centering
	\subfigure[$R=1$]{\includegraphics[width=0.45\textwidth]
		{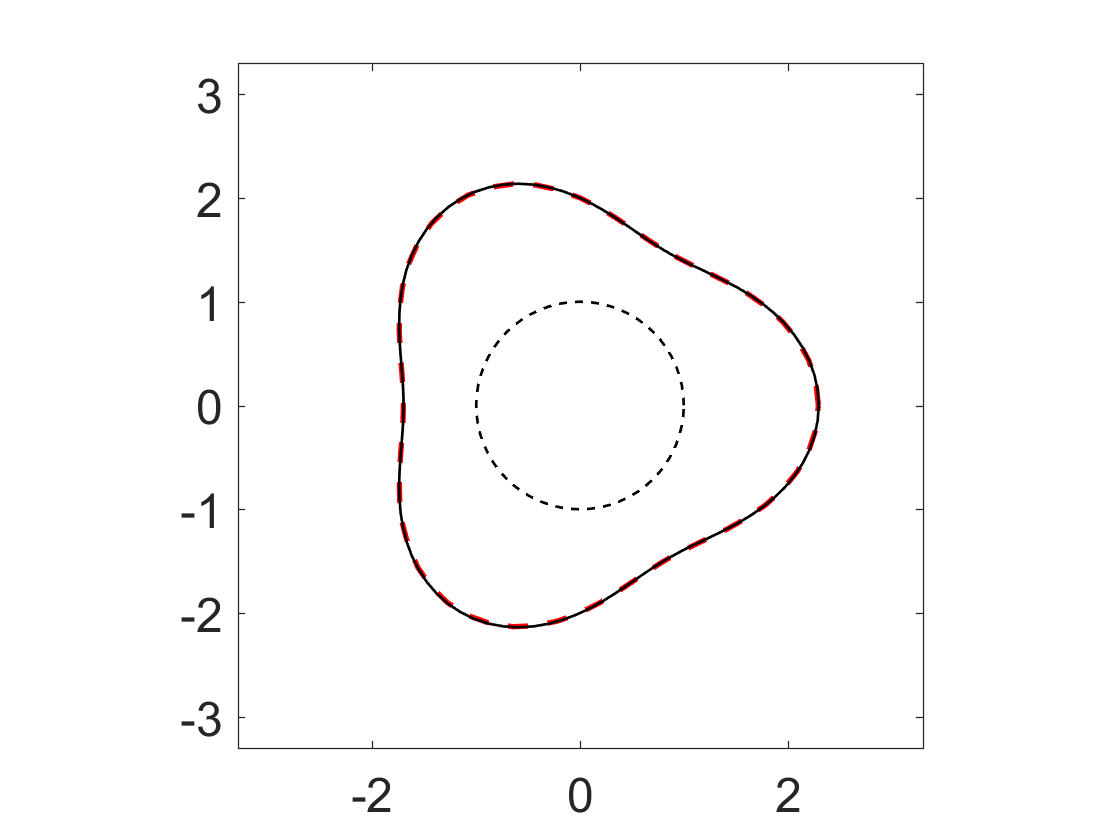}}
	\subfigure[$R=3.2$]{\includegraphics[width=0.45\textwidth]
		{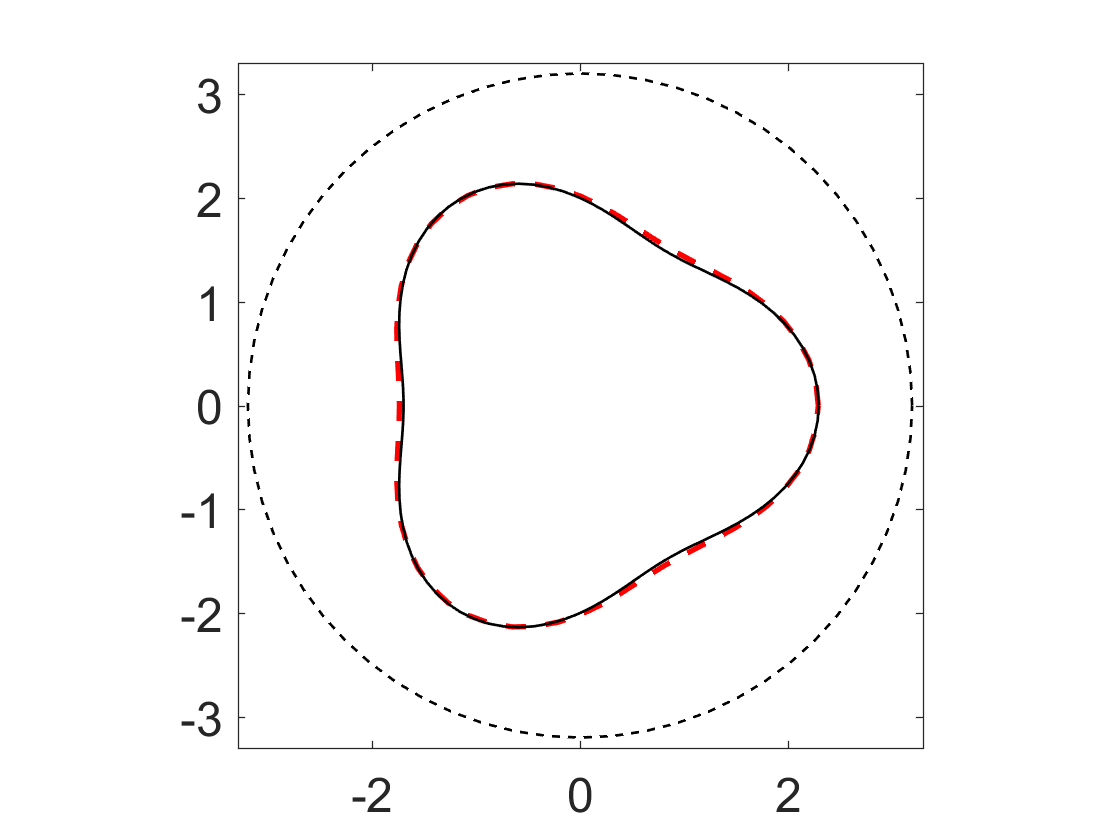}}
	\caption{\label{fig:pear-multi} Reconstructions of the pear-shaped obstacle by using Algorithm \ref {eq:alg2} with different initial guesses $R$.}
\end{figure}

\begin{figure}
	\centering
	\subfigure[$1\%$ noise]{\includegraphics[width=0.45\textwidth]
		{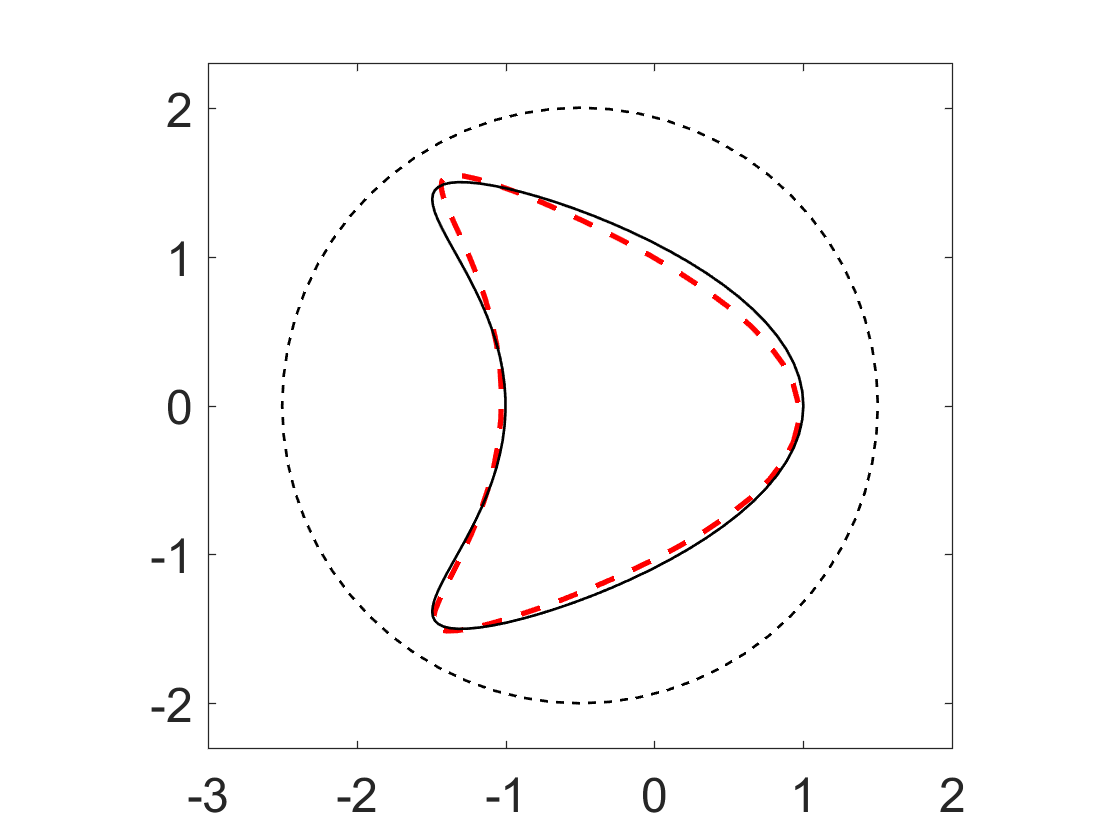}}
	\subfigure[$5\%$ noise]{\includegraphics[width=0.45\textwidth]
		{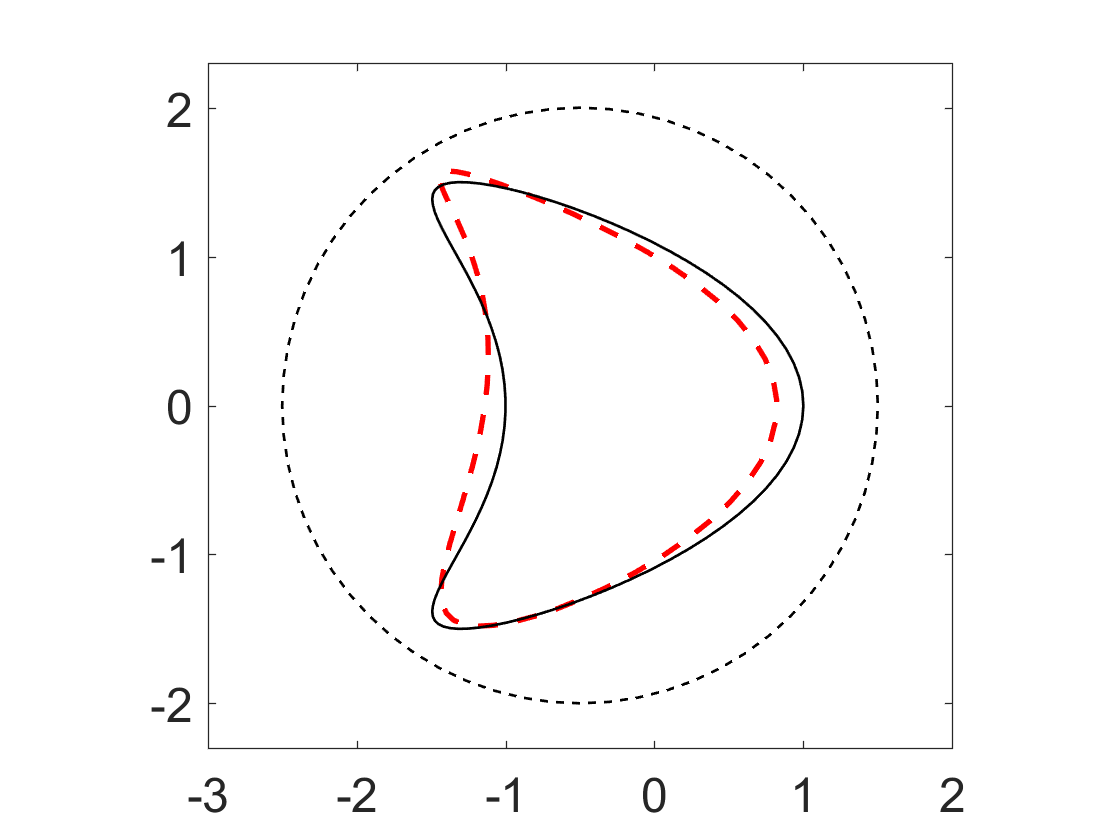}}\\
	\subfigure[$1\%$ noise]{\includegraphics[width=0.45\textwidth]
		{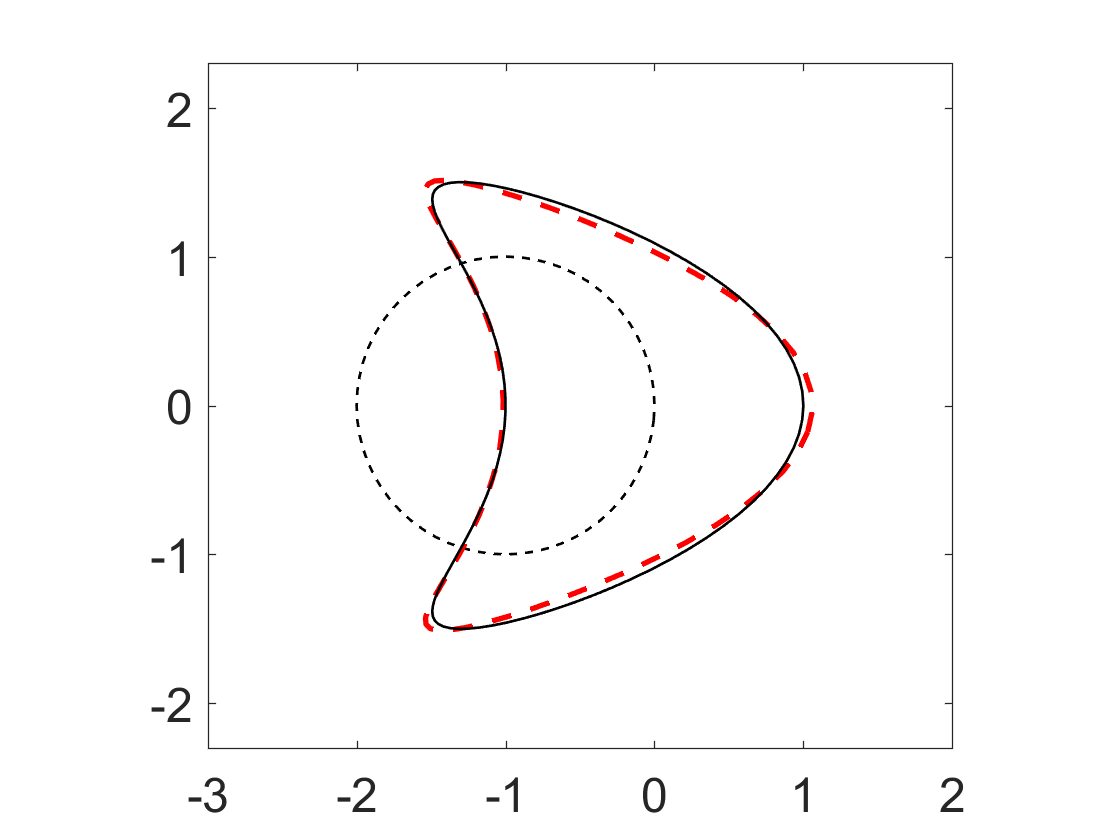}}
	\subfigure[$5\%$ noise]{\includegraphics[width=0.45\textwidth]
		{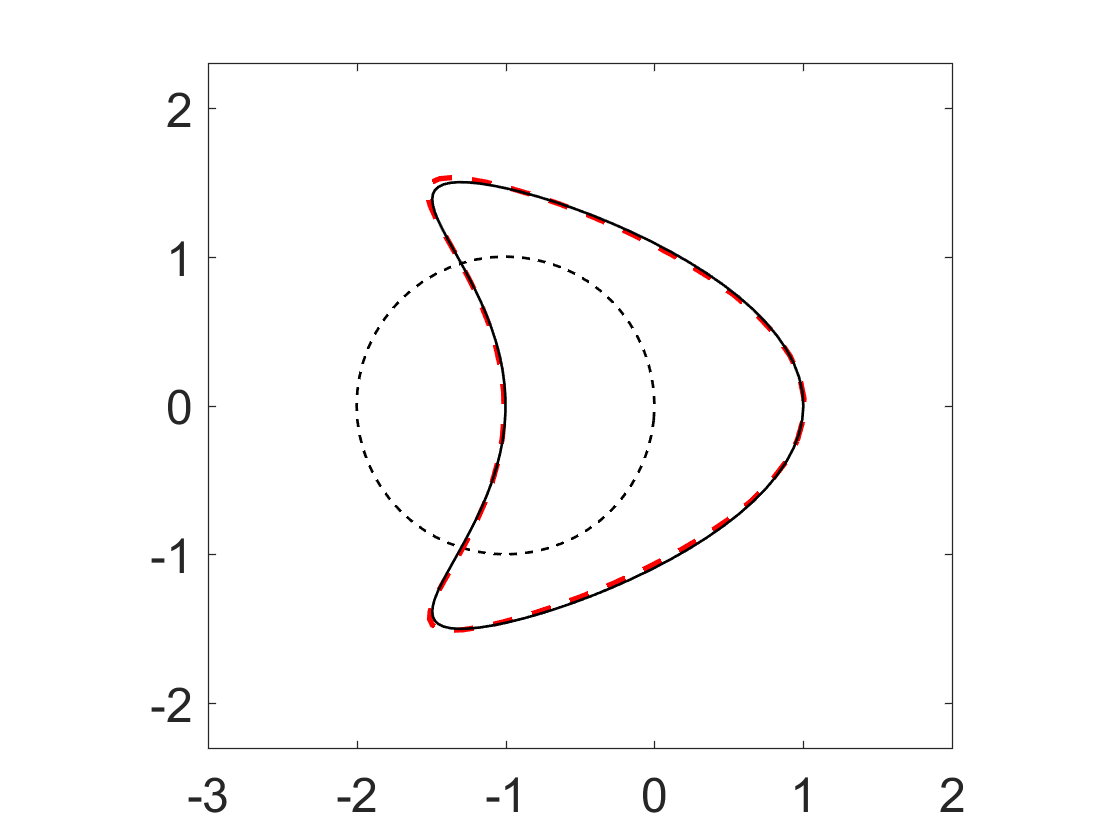}}
	\caption{\label{fig:kite} Reconstructions of the kite-shaped scatterer by using Algorithm \ref {eq:alg2}. Top row: reconstructions with three eigenvalues;  bottom row: reconstructions with six eigenvalues. }
\end{figure}

In the second example, we consider a concave case and the scatterer is given by a kite-shaped domain, which is parameterized by
\begin{equation*}
	x(\phi)=(\cos\phi +0.65 \cos 2\phi -0.65,\ 1.5 \sin \phi  ), \quad \phi\in[0,\, 2\pi].
\end{equation*}
Figure \ref{fig:kite} presents reconstructions of the kite-shaped scatterer by using the multifrequency Newton-type method, i.e., Algorithm \ref{eq:alg2}. The top row of figure \ref{fig:kite} shows the recovery results with the first three eigenvalues, where the initial shape is a circle centered at $(-0.5,0)$ with radius $R=2$. The bottom row of figure \ref{fig:kite} shows the recovery results with the first six eigenvalues, where the initial guess is a circle centered at $(-1,0)$ with radius $R=1$. To test the stability of the proposed approach, $1\%$ and $5\%$ noise are respectively added to the measured far-field data. From figure  \ref{fig:kite} (a) and (b), it is clear to see that the accuracy is reduced as the noise increases. Furthermore, according to figure \ref{fig:kite} (b) and (d), one can find that the accuracy is improved as the number of frequencies increases.

\subsubsection{3D reconstructions}
In this part, we consider a more challenging case for reconstructing the three-dimensional scatterer.  For the three-dimensional case, we choose an approximation of the surface boundary with the form
\begin{equation*}
	z(\theta, \phi)=\{r(\theta, \phi)\,(\sin\theta \cos \phi,\,  \sin\theta \sin \phi,\,  \cos \theta): \theta\in[0,\pi],\, \phi\in[0, 2\pi]\},
\end{equation*}
where $r\in C^2([0,\pi]\times[0,2\pi])$ is the radial function and it is given by spherical harmonics with order less than or equal $N_z\in \mathbb{N}$, i.e.,
\begin{equation*}
	r(\theta, \phi)= \sum_{\ell=0}^{N_z} \sum_{s=-\ell}^{\ell}  \left( a_{\ell}^s \cos s \phi+ b_{\ell}^s \sin s \phi\right)P_{\ell}^s(\cos \theta).
\end{equation*}
In what follows, the regularized parameter $\alpha$ is given by $\alpha=10^{-4}$, the stop criterion is set to be  $\|h\|_{L^2([0,\pi]\times[0,2\pi])}<10^{-4}$ and the order is chosen as $N_z=8$. In addition,  the measurement directions are chosen to be $500$ pseudo-uniformly distributed measured points on the unit sphere. The incident directions are chosen to be $15\times 30$ uniform rectangular mesh of $[0,\pi]\times[0,2\pi]$.

We first consider an ellipsoid domain in three dimensions (see Figure \ref{fig:elipsolid}(a) ), which is parameterized by
\begin{equation*}
	x(t,\tau)=(3\sin t \cos \tau,\, 3\sin t \sin \tau,\, 6\cos t ), \quad t\in[0,\,\pi],\ \tau\in [0,\, 2\pi].
\end{equation*}
Here $1\%$ perturbed noise is added to the far field data  and we use Algorithm \ref{eq:alg1} to recover the shape of the obstacle.  Figure \ref{fig:elipsolid} shows the reconstructed shapes with different initial guesses, where the eigenvalue is chosen as $k=1.4513$. From the surface plots, it can be seen that the reconstructions are very close to the exact obstacle.

Next, we are devoted to the identification of a concave scatter, where the domain is parameterized
by
\begin{equation*}
	x(t,\tau)=(1.5 \sin t \cos \tau,\, 1.5\sin t \sin \tau,\, 0.2-\cos t-0.65 \cos 2t ),
\end{equation*}
for $t\in[0,\,\pi]$ and $\tau\in [0,\, 2\pi]$.
Actually, it is difficult to determine the boundary of the obstacle by using single frequency data. Therefore, the multifrequency Newton-type method is used to recover the shape with two Neumann eigenvalues ($k_1=2.4514$ and $k_2=3.2630$).
To exhibit the iterative process, we plot recovery results with different iteration numbers $N_t$ in figure \ref{fig:kite3D}. One can observe that the proposed approach demonstrates better imaging performance as the number of iterations increases.

\begin{figure}
	\centering
	\subfigure[exact]{\includegraphics[width=0.45\textwidth]
		{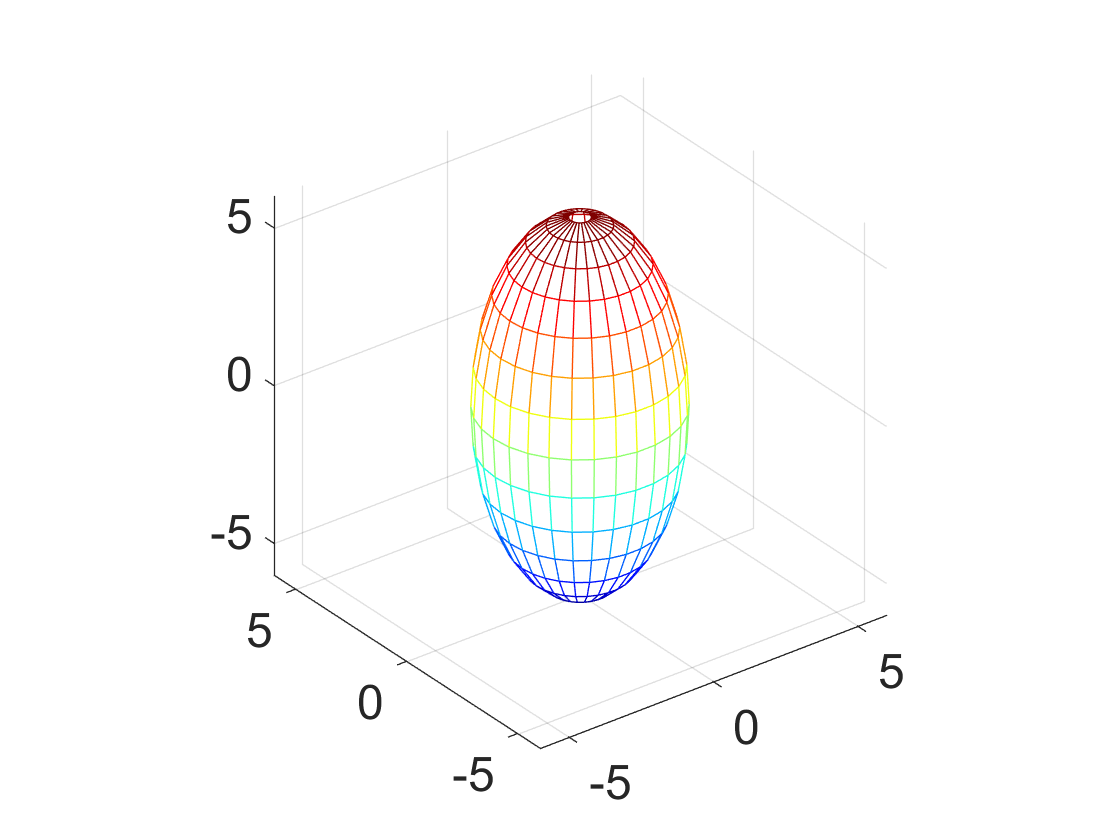}}
	\subfigure[recovery with $R=3$]{\includegraphics[width=0.45\textwidth]
		{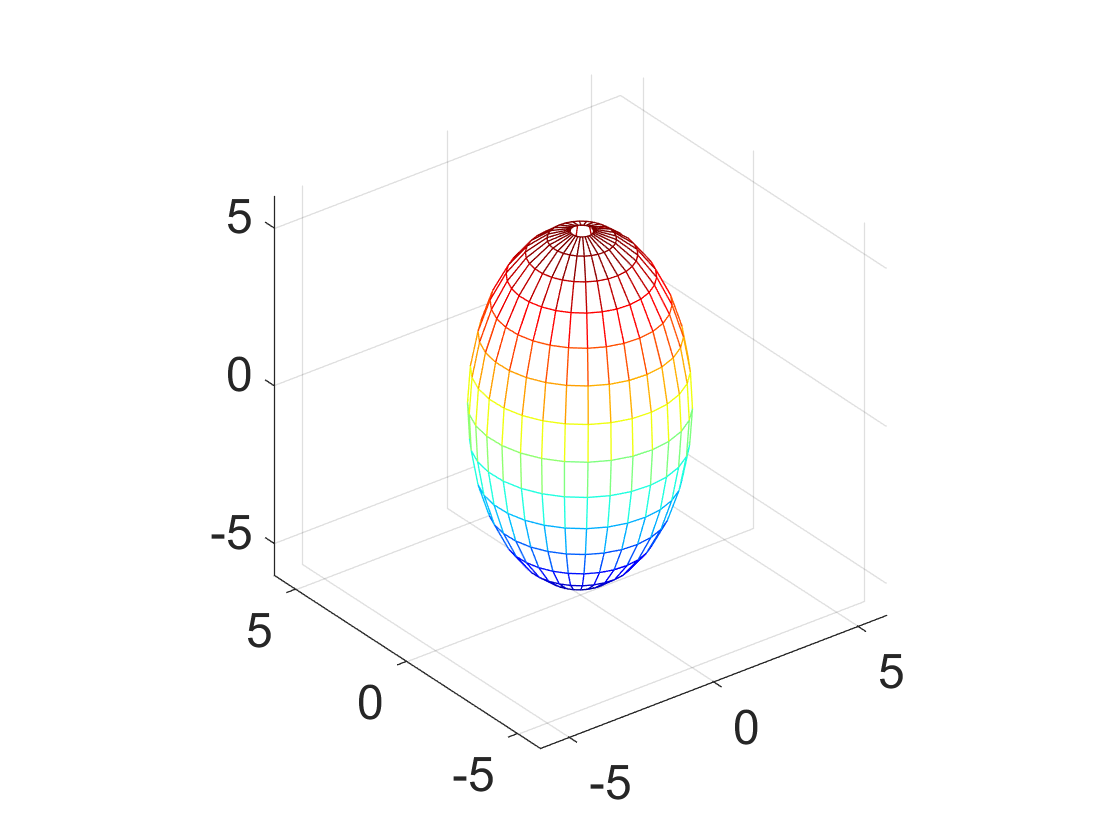}}\\
	\subfigure[recovery with $R=4$]{\includegraphics[width=0.45\textwidth]
		{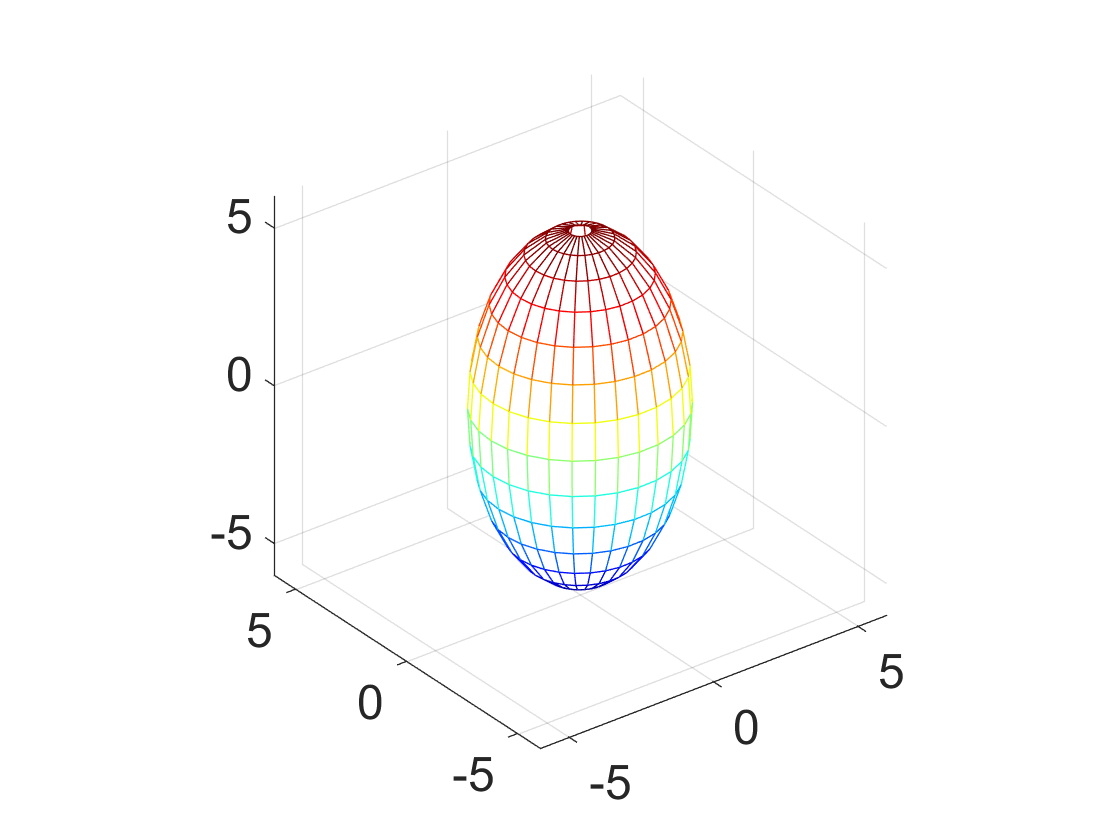}}
	\subfigure[recovery with $R=5$]{\includegraphics[width=0.45\textwidth]
		{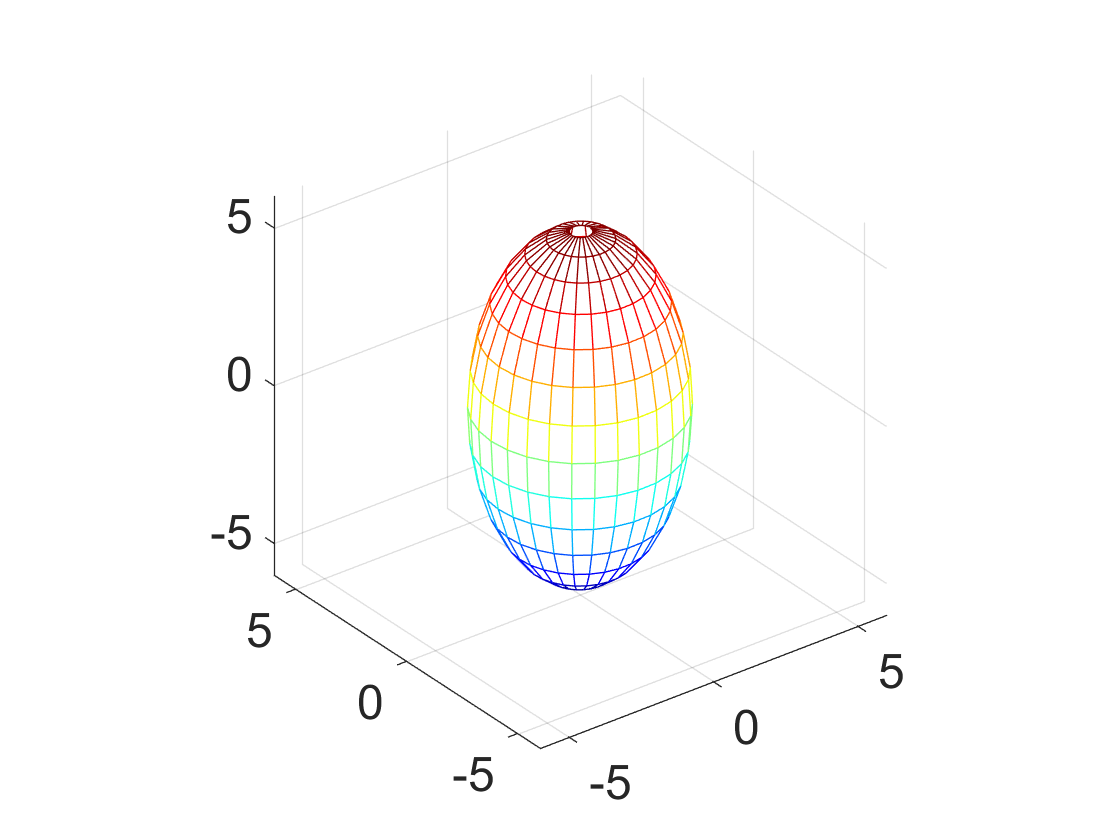}}
	\caption{\label{fig:elipsolid} Exact and reconstructed ellipsoid obstacles with different initial guesses $R$.}
\end{figure}

\begin{figure}
	\centering
	\subfigure[exact]{\includegraphics[width=0.45\textwidth]
		{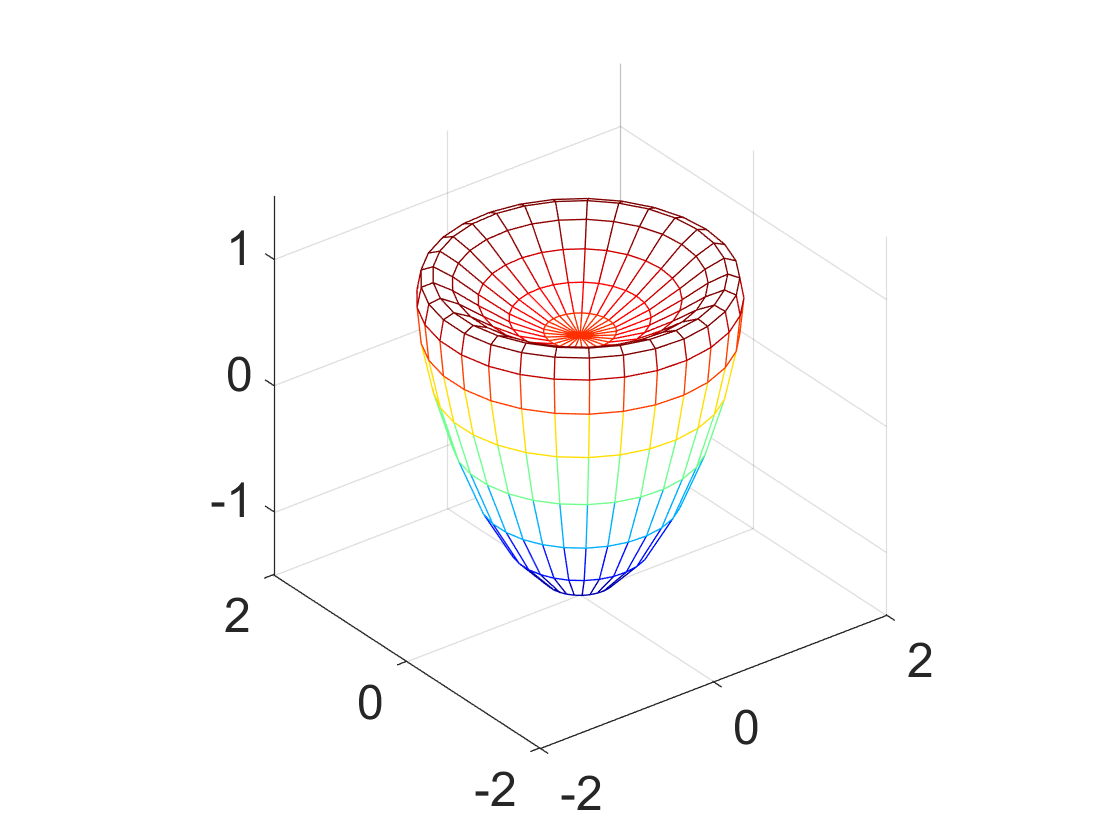}}
	\subfigure[$N_t=1$]{\includegraphics[width=0.45\textwidth]
		{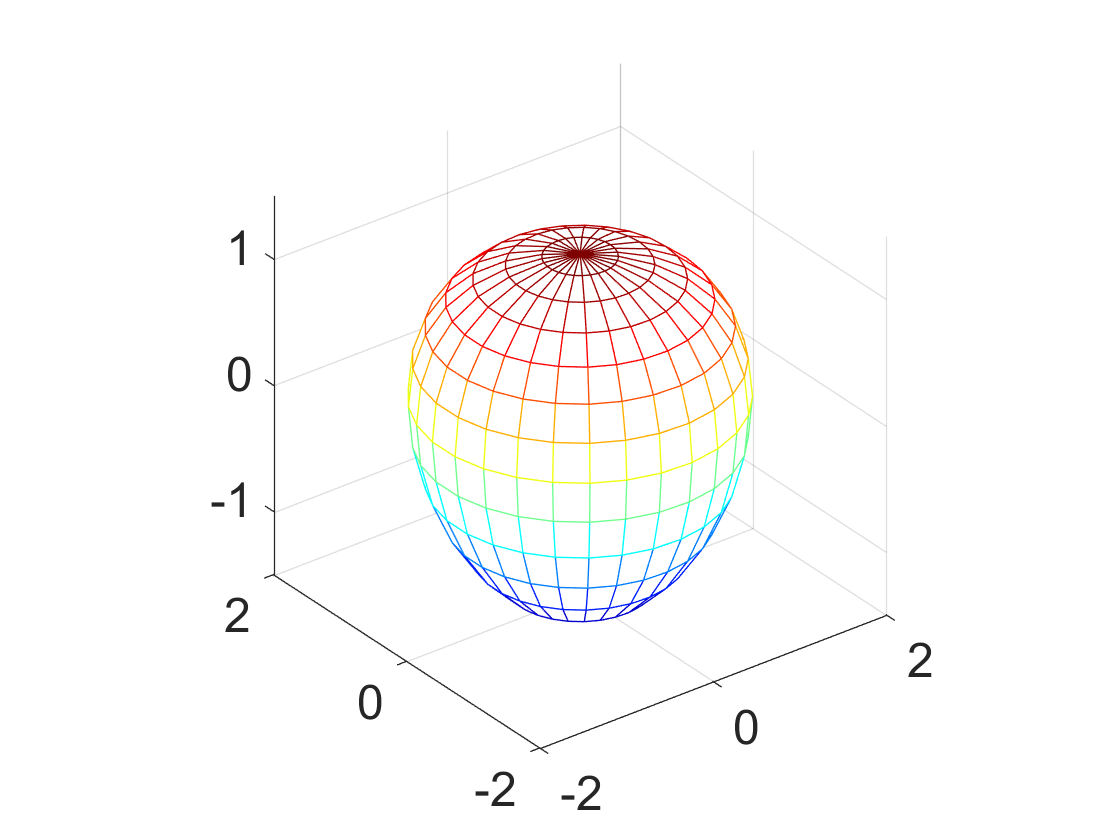}}\\
	\subfigure[$N_t=3$]{\includegraphics[width=0.45\textwidth]
		{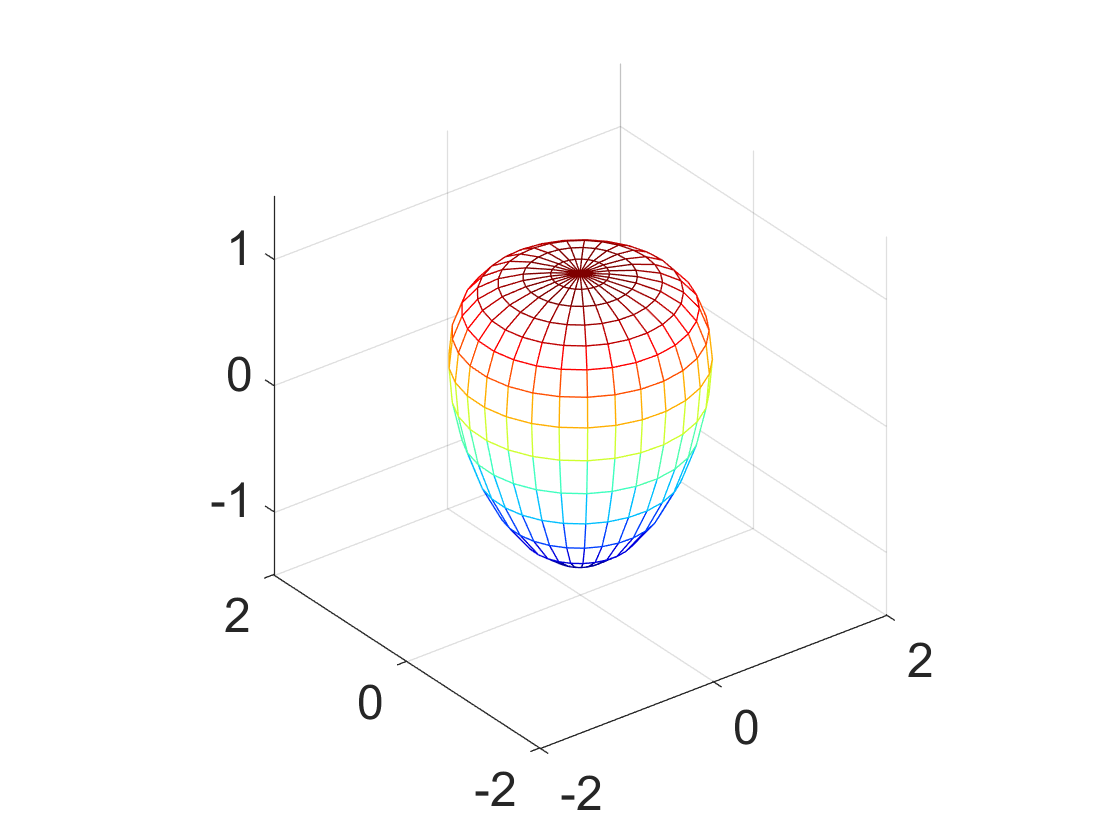}}
	\subfigure[$N_t=5$]{\includegraphics[width=0.45\textwidth]
		{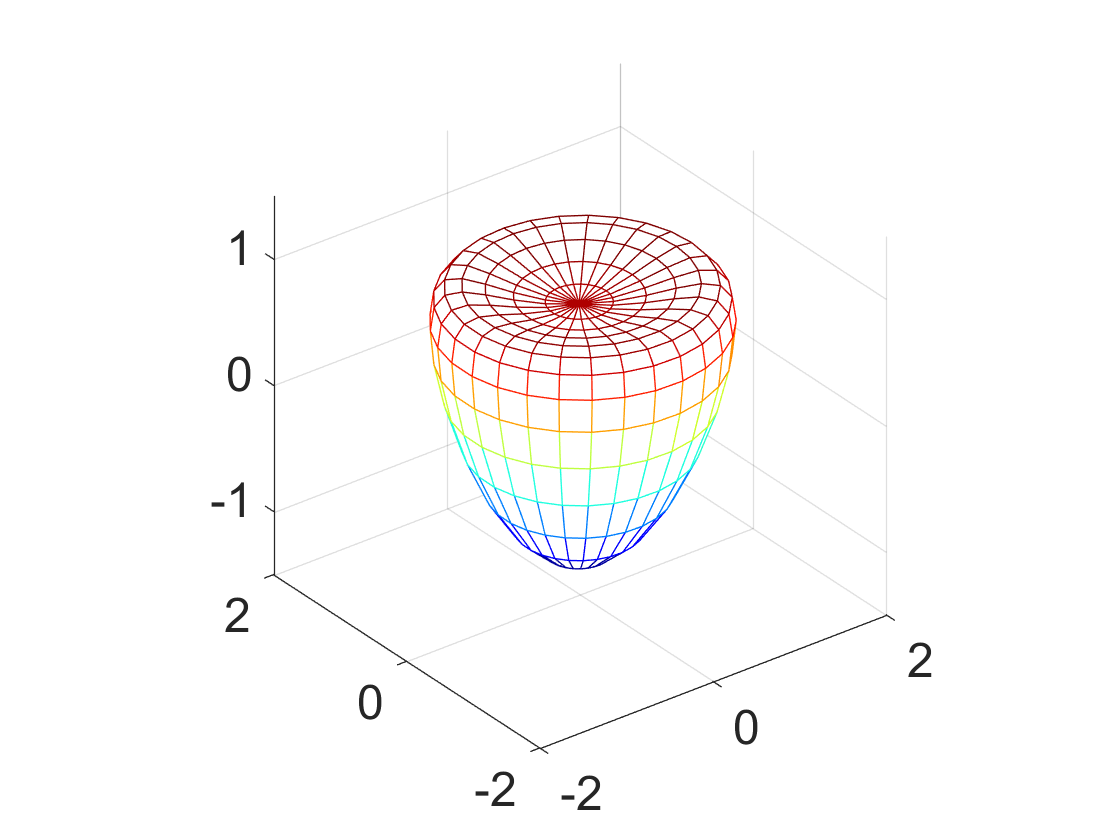}}\\
	\subfigure[$N_t=7$]{\includegraphics[width=0.45\textwidth]
		{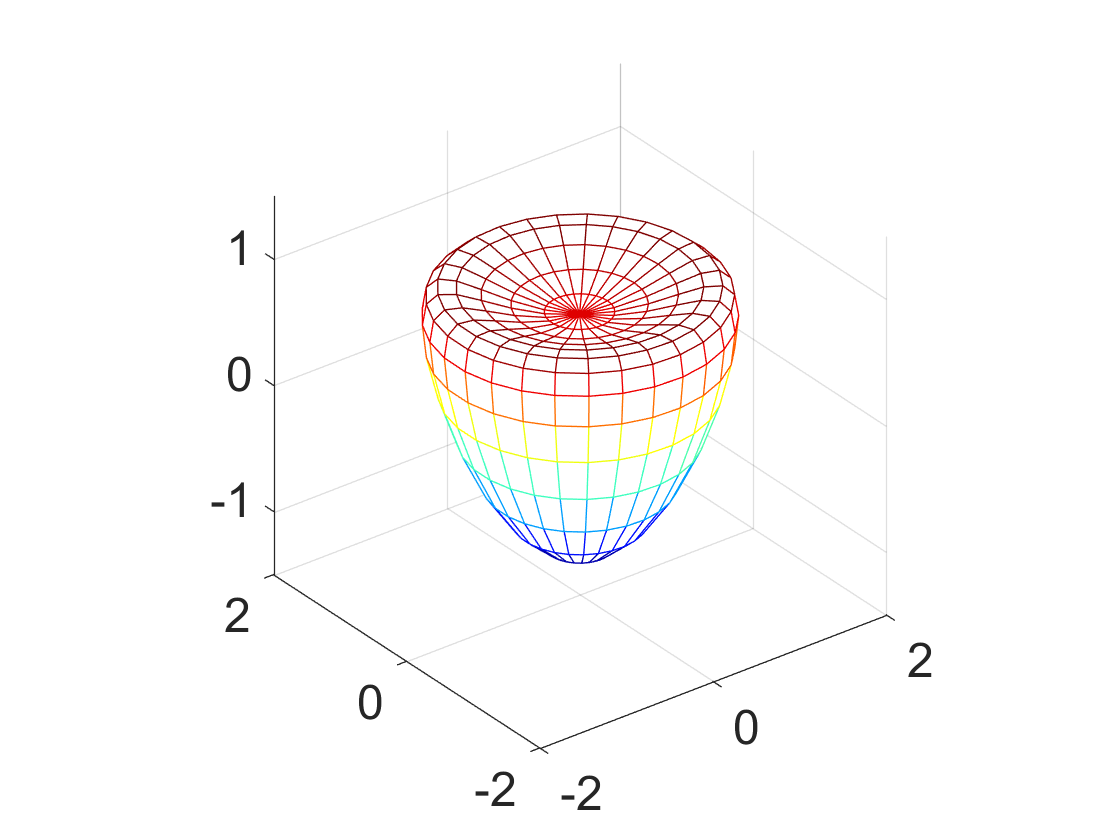}}
	\subfigure[$N_t=10$]{\includegraphics[width=0.45\textwidth]
		{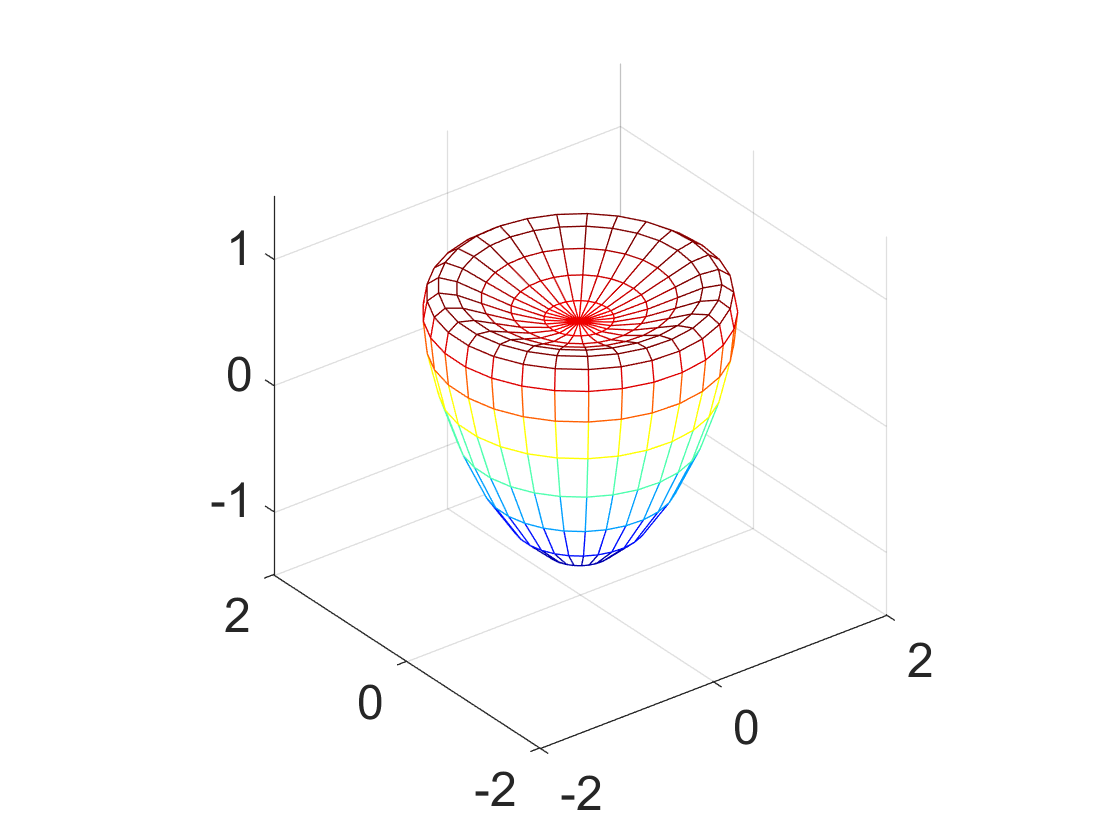}}
	\caption{\label{fig:kite3D} Surface plots of the exact and reconstructed results with different iteration numbers $N_t$.}
\end{figure}

\begin{rem}
	In order to ensure the consistency of theory and numeric, we only present numerical examples for reconstructing the sound-hard obstacle. In fact, the proposed imaging approach based on the interior resonant modes is easily extended to recover the sound-soft obstacle.
\end{rem}

\section*{Acknowledgment}
The work of H. Liu was supported by Hong Kong RGC General Research Funds (project numbers, 11300821, 12301420 and 12302919) and the NSFC-RGC Joint Research Grant (project number, N\_CityU101/21).
The work of X. Wang was supported by the NSFC grants (12001140 and 11971133).

\end{document}